\pgfplotsset{compat=1.16}
\newtheorem{theorem}{Theorem}
\newtheorem{proposition}{Proposition}
\newtheorem{lemma}[theorem]{Lemma}
\newtheorem{corollary}{Corollary}
\newtheorem{remark}{Remark}
\newcommand{\Sper}{S^{per}}                       
\newcommand{\dhx}{\,\mathrm d\widehat x}
\newcommand{\dx}{\, \mathrm dx}
\newcommand{\dy}{\,\mathrm dy}
\newcommand{\Bdy}{\mathcal{D}}
\title{The Isogeometric Fast Fourier-based Diagonalization method}
\author{
Monica Montardini\footnote{University of Pavia, Via Ferrata 5, 27100, Pavia. \texttt{monica.montardini@unipv.it}},
Stefan Takacs\footnote{Institute of Numerical Mathematics, Johannes Kepler University Linz, Altenberger Str. 60, 4040 Linz, Austria. \texttt{stefan.takacs@numa.uni-linz.ac.at}},
Mattia Tani\footnote{University of Pavia, Via Ferrata 5, 27100, Pavia. \texttt{mattia.tani@unipv.it}}
}
\date{}
\begin{document}

\maketitle

\begin{abstract}
The construction of robust solvers for linear systems obtained from the discretization of partial differential equations using Isogeometric Analysis is challenging since the condition number of the system matrix not only grows with the reciprocal square of the grid size (for second order problems), but also exponentially with the spline degree.
The Fast Diagonalization method allows the construction of a preconditioner that is robust both in grid size and spline degree. Although this method is efficient in practice, its computational complexity is superlinear in the number of degrees of freedom. 
In this work, we construct a variant of the Fast Diagonalization method that can exploit the Fast Fourier Transformation. Note that, because of boundary effects, a Fourier Transformation cannot diagonalize the overall problem.
We circumvent this issue using a stable splitting of the spline space. The resulting preconditioner is still robust with respect to the grid size and spline degree and can be realized with a computational complexity that grows almost linearly with the number of degrees of freedom.
\end{abstract}

\section{Introduction}

It is known that when the Poisson problem is discretized on a tensor-product grid using piecewise linear finite elements (or equivalently centered finite differences), then the resulting linear system can be solved very efficiently using a so-called Fast Poisson solver. These methods, which were very popular when finite differences were the standard approach to tackle partial differential equations, rely on the Fast Fourier Transform or other ad-hoc techniques to compute the solution with $\mathcal{O} \left( N_{dof} \log N_{dof} \right) $ (or even less) floating point operations (flops), where $N_{dof}$ is the total number of degrees of freedom, see e.g.~\cite[Section 4.5]{VanLoan1992} and references therein. 

The purpose of this work is to extend Fast Poisson solvers to problems discretized with splines of arbitrary degree, in the context of isogeometric analysis (IGA)~\cite{CottrellHughesBazilevs2005}. Here, assuming that the physical domain $\Omega \subset \mathbb{R}^d$, $d \geq 2$, is given in parametrized form, the functions of the Galerkin space are taken as the pull-back of tensor product B-splines defined on the reference domain $[0,1]^d$.
In this context, iterative Krylov methods are often preferred over direct ones to address large linear systems, since the latter suffer from the lower sparsity associated with smooth basis functions, see~\cite{Collier2012}. However, it is particularly challenging to design a preconditioner that is robust with respect to the spline degree $p$. Indeed, multigrid methods with standard smoothers, that are suitable for low-order finite elements, are robust with respect to the mesh size $h$ but suffer from the exponential growth of the condition number with $p$, see~\cite{Gahalaut2013}. Carefully designed smoothers, like the one proposed in~\cite{Hofreither2017}, are required to obtain multigrid solvers that are robust with respect to both parameters.

An alternative approach is the Fast Diagonalization (FD) method~\cite{Lynch1964}, which has already been used in the context of IGA, see~\cite{Sangalli2016,Fuentes2023,Bahari2024}. This method, yielding a preconditioner that is robust with respect to both $h$ and $p$, is a direct solver for the problem on the reference domain. Here, the tensor structure of B-splines induces a tensor structure on the eigenvectors of the Galerkin matrix, which can be leveraged to solve the linear system by explicitly diagonalizing it. If no further structure is assumed, the complexity of one application of the preconditioner is independent of $p$ and proportional to $N_{dof}^{1 + 1/d}$. Even though the method is very fast in practice, the computational cost is superlinear and this could be considered an issue. 
In this paper, we propose to replace the exact eigendecomposition with an approximate one, allowing to exploit the Fast Fourier Transform (FFT) to apply the preconditioner.
Note that a Fourier transformation would be able to diagonalize the involved matrices if appropriate periodic boundary conditions were imposed. However, this is not the case if standard B-spline discretizations, as obtained with open knot vectors, are considered. 

Based on the results from~\cite{Hofreither2017}, for each parametric direction we decompose the spline space into a ``regular'' part and a small remainder, and we diagonalize the problem separately on each one. On the small subspace, the eigendecomposition can be computed numerically. As for the regular subspace, we show that the discrete eigenfunctions interpolate the eigenfunctions of the Laplacian associated with the proper boundary conditions, which to the best of our knowledge is a novel result for this specific setting and, we think, one interesting in itself. 
In particular, matrix-vector products involving the eigenvector matrix can be computed using a (fast) Fourier transformation, yielding an application cost for the preconditioner that is proportional to $N_{dof} \left( \log N_{dof} + p \right)$ flops. We refer to this approach as \textit{Isogeometric Fast Fourier-based Diagonalization} (IFFD) method.
Since the decomposition of the spline space is stable with respect to the relevant Sobolev norms, the IFFD preconditioner is spectrally equivalent to the FD preconditioner.
Furthermore, we show that for a particular choice of the basis for the regular subspace the expression of the discrete eigenfunctions simplifies, and the interpolation operator is not required. We remark that the latter result was already shown in the recent works~\cite{Deng2023,Lamsahel2025}. We also mention that in~\cite{Ekstroem2018} it was shown that, if $p=2$ and only Dirichlet boundary conditions are considered, then the system matrix can be diagonalized with the FFT. This is consistent with the results derived in this paper, since in that case the reminder subspace is null. Finally, we remark that the presented approach was already employed by the first and the last author in~\cite{Montardini2023,Montardini2025} as a preconditioner for low-rank Krylov methods. In these works, however, no theoretical justification was given.

This paper is organized as follows. In Section \ref{sec:model_problem}, we present the model problem and the isogeometric discretization. In Section \ref{sec:FD} we review the FD method and introduce its approximated version. In Section \ref{sec:splitting}, we present the splittings of the univariate spline spaces and show their stability in the $H^1$ norm, from which the robustness of the IFFD preconditioner is derived.
In Section \ref{sec:discrete_eigenfunctions}, we derive an explicit expression for the discrete eigenfunctions, while in Section \ref{sec:special_basis} we derive the simplified expression for the eigenvectors obtained by fixing a particular basis. Finally, in Section \ref{sec:num_res} we numerically assess the performance of our preconditioning strategy, and in Section \ref{sec:conclusions} we draw the conclusions, suggesting in particular how this strategy can be applied in a multi-patch setting, where the domain is given as the union of independently parametrized subdomains.

\section{Model problem and discretization}
\label{sec:model_problem}

In this paper, we discuss the main ideas for the following single-patch model problem. Let $\Omega \subset \mathbb R^d$, $d \geq 2$, be the computational domain and assume $\partial \Omega = \Gamma_\Bdy \cup \Gamma_{\mathcal N}$, where $\Gamma_\Bdy \cap \Gamma_{\mathcal N} = \emptyset$ and $\Gamma_\Bdy$ has positive measure.
Given $f : L^2(\Omega) \to \mathbb{R}$,
we consider the problem of finding  $u\in H^1_\Bdy(\Omega) := \left\{ v \in H^1(\Omega) \, : \,  v=0 \text{ on } \Gamma_\Bdy \right\}$ such that
\begin{equation}\label{eq:vf}
	\int_\Omega \nabla u \cdot \nabla v \dx
	=
	\int_\Omega fv \dx
	\qquad \mbox{for all} \qquad
	v\in H^1_D(\Omega).
\end{equation}
We assume that the computational domain is parameterized with a bijective geometry function $\mathbf G:\widehat\Omega:=[0,1]^d \to \Omega$, satisfying
\begin{equation}\label{eq:geo-equiv}
		\|J_{\mathbf G}\|_{L^\infty} \le C_1
		\qquad\mbox{and}\qquad
		\|J_{\mathbf G}^{-1}\|_{L^\infty} \le C_2
\end{equation}
with suitable constants, where $J_{\mathbf G}$ denotes the Jacobian. Using the \emph{pull-back principle}, the problem can be entirely represented using integrals on the parameter domain $\widehat\Omega$. Let
$\widehat u := u \circ \mathbf G$ and $\widehat f := f \circ \mathbf G$ be the representations of the solution and the right-hand side on the parameter domain, respectively.
Analogously, let $\widehat \Gamma_\Bdy = \mathbf G^{-1}(\Gamma_\Bdy)$ be the pull-back of the Dirichlet boundary. We assume that $\widehat \Gamma_\Bdy$ is the union of one or more faces (edges for $d=2$ or faces for $d=3$) of $\widehat \Omega$, i.e.,
there are sets $\Bdy_1,\dots,\Bdy_d \subseteq \left\{ 0,1 \right\} $ such that
\begin{equation}\label{eq:gramma tens}
    \widehat \Gamma_\Bdy
    =
    \bigcup_{k=1}^d
    \{ (x_1,\dots,x_d) \in \widehat\Omega
        : x_k \in \Bdy_k 
    \}.
\end{equation}
Using standard chain and substitution rules, the problem~\eqref{eq:vf} can be equivalently rewritten as follows. Find $\widehat u\in H^1_\Bdy(\widehat \Omega):=\{ v \in H^1(\widehat \Omega) \, : \,  v=0 \text{ on } \widehat \Gamma_D \}$ such that
\[
	\underbrace{\int_{\widehat\Omega} (J_{\mathbf G}^{-\top} \nabla \widehat u) \cdot  (J_{\mathbf G}^{-\top}\nabla \widehat v) \, |\det  J_{\mathbf G}| \dhx}_{\displaystyle a(\widehat u, \widehat v) := }
	=
	\underbrace{\int_{\widehat\Omega} \widehat f \widehat v \, |\det  J_{\mathbf G}| \dhx}_{\displaystyle \langle \widehat f, \widehat v\rangle := }
	\qquad \mbox{for all} \qquad
	\widehat v\in H^1_\Bdy(\widehat \Omega).
\]
This problem is discretized using tensor-product splines. For simplicity, we restrict ourselves to splines of maximum smoothness, although the proposed approach can be generalized to less regular splines, see Remark \ref{rmk:reduced_regularity}. 
The space of such splines can be described by the breakpoints $Z=(z_0,z_1,\ldots z_n)$ with $0=z_0 < z_1 <\cdots <z_{n-1}<z_n=1$ via the formula
\[
		S_{p,h}
		:=
		\left\{
				v_h \in C^{p-1}[0,1]
				\, : \,
				v_h|_{[z_{k-1},z_k]} \in \pi_p \;\; \mbox{for} \;\; k \in \{1,\ldots,n\}
		\right\},
\]
where $\pi_p$ denotes the space of polynomial functions of degree (at most) $p$. We assume that the grid of breakpoints is uniform, i.e. $z_k = kh$ for $k=0,\ldots,n$, with $h = 1/n$.

The standard basis for this space is defined by the Cox-de Boor formula, see e.g. \cite{Boor2001}. That basis is advantageous since it forms a partition of unity and each basis function is non-negative and has a bounded support. For using the Cox-de Boor formula, we represent the space using a $p$-open knot vector $\Xi = (\xi_{0},\xi_1,\ldots \xi_{n+2p})$, which satisfies $0 = z_0 = \xi_{0}=\cdots= \xi_p < \xi_{p+1}=z_1 < \cdots < \xi_{n+p-1} = z_{n-1} < \xi_{n+p} = \cdots  = \xi_{n+2p} = z_n = 1$ for the case of maximum smoothness. The basis functions $\mathcal B_{p,h,0},\ldots \mathcal B_{p,h,n+p-1}$ are recursively given by
\begin{align*}
\mathcal B_{0,h,i}(x) & :=
\begin{cases}
1 & \mbox{if }  x \in [ \xi_i, \xi_{i+1} ) \\
0 & \mbox{otherwise}
\end{cases}
&&  \hspace{-3em}\mbox{for } i=0,\ldots, n+2p-1, \\
\mathcal B_{q,h,i}(x) 
& := \frac{ x - \xi_i }{\xi_{i+q} - \xi_i} \mathcal B_{q-1,h,i}(x) +\frac{\xi_{i+q+1}-x }{\xi_{i+q+1} - \xi_{i+1}} \mathcal B_{q-1,h,i+1}(x)
\\&&&\hspace{-3em} \mbox{for } i=0,\ldots, n+2p-q-1.
\end{align*}
In any case where the denominator vanishes in the recursion formula, also $\mathcal B_{p-1,h,i}$ or $\mathcal B_{p-1,h,i+1}$ vanishes, respectively, and the corresponding summand is dropped.  The results obtained for $q=p$, give the B-spline basis
\begin{align}\label{eq:bspline basis}
    \{\mathcal  B_{p,h,i}(x) : i=0,\dots,n+p-1 \}    .
\end{align}

For ease of notation, we restrict ourselves to the two-dimensional case, and consider the same mesh size for both directions.
As function space over the parameter domain $\widehat \Omega$, we consider
\[
	\widehat V_h := \{ \widehat v_h \in S_{p,h} \times  S_{p,h} : \widehat v_h |_{\mathbf G^{-1} \left( \Gamma_\Bdy \right) } = 0 \}.
\]
Since we have assumed in~\eqref{eq:gramma tens} that $\widehat \Gamma_\Bdy$ is the union of one or more edges of the unit square, $\widehat V_h$ is a tensor product space
\[
	\widehat V_h = S_{p,h,\Bdy_1} \times  S_{p,h,\Bdy_2},
\]
where, recalling that $\Bdy_k \subseteq \left\{ 0,1\right\}$ we have defined
\[
	S_{p,h,\Bdy_k} := \{ \widehat v_h \in S_{p,h}  : \widehat v_h(x) = 0 \;\; \mbox{for} \;\; x \in \Bdy_k\}.
\]

For $k=1,2$, we fix a basis for $S_{p,h,\Bdy_k}$ by removing from the standard basis~\eqref{eq:bspline basis} the functions that do not vanish on $\Bdy_k$. Since B-splines interpolate at the boundary, this procedure removes $|\Bdy_k|=\dim S_{p,h} - \dim S_{p,h,\Bdy_k}$ basis functions. We refer to the chosen basis functions as $B_{1,k}, \ldots,B_{m_k,k}$, where $m_k$ denotes the dimension of $S_{p,h,\Bdy_k}$. Then we introduce the  lexicographically ordered basis for $\widehat V_h $ as
\begin{equation}\label{eq:tp-B-basis}
		{\mathbf B}_{i+m_1(j-1)}( x_1, x_2) := B_{i,1}(x_1) B_{j,2}(x_2),
\end{equation}
for $i=1,\ldots,m_1$ and $j=1,\ldots,m_2$. Let $N_{dof} = m_1 m_2$ denote the dimension of $\widehat V_h$.

The discretization of the model problem using the Galerkin principle reads as follows.
Find $\widehat u_h \in \widehat V_h$ such that
\[
	 a(\widehat u_h, \widehat v_h)
	=\langle \widehat f, \widehat v_h\rangle
	\quad\mbox{for all}\quad
	\widehat v_h \in \widehat V_h.
\]
The solution $\widehat u_h$ can be expressed by a coordinate vector $\mathbf u$, representing the solution with respect to the chosen tensor-product basis~\eqref{eq:tp-B-basis}. This gives rise to the matrix-vector formulation of the Galerkin discretization
\begin{equation}\label{eq:linsys}
	A\, \mathbf u = \mathbf f,
\end{equation}
where the $A := [ a(\mathbf B_j, \mathbf B_i) ]_{i,j=1}^{N_{dof}}$ is the stiffness matrix and
$\mathbf f := [ \langle \widehat f,  \mathbf B_i \rangle ]_{i=1}^{N_{dof}}$ is the load vector.

\section{Fast Diagonalization method revisited}
\label{sec:FD}

We are interested in solving~\eqref{eq:linsys} using an iterative solver. Since the system matrix is symmetric positive definite, we rely on the Conjugate Gradient method with an appropriate preconditioner. Thus, we consider the modified problem
\[
			\mathcal P^{-1} A \, \mathbf u = \mathcal P^{-1} \mathbf f
\]
for some appropriately chosen matrix $\mathcal P$. The convergence rate of the Preconditioned Conjugate Gradient (PCG) method is bounded by
\[
		\frac{\sqrt{\kappa(\mathcal P^{-1} A)}-1}{\sqrt{\kappa(\mathcal P^{-1} A)}+1},
\]
where $\kappa$ denotes the condition number of the corresponding matrix, intended as the ratio between its maximum and minimum eigenvalue. In order to obtain a \emph{robust} convergence, we need a uniform bound on $\kappa \left( \mathcal P^{-1} A \right)$. 
To realize the solver efficiently, we must be able to efficiently compute the application of the preconditioner to a vector, i.e., the results of $\mathcal P^{-1}\mathbf r$ for any given vector $\mathbf r$. As already announced in the introduction, we discuss the Fast Diagonalization (FD) method first.

To set up the FD method, we first approximate the stiffness matrix $A$ by a simpler matrix $\widehat A$ that is the sum of Kronecker product matrices. That matrix is obtained by discretizing a Poisson problem where the geometry map $\mathbf{G}$ is the identity: we define
$\widehat A := [ \widehat a(\mathbf B_j, \mathbf B_i) ]_{i,j=1}^{N_{dof}}$, where
\[
			\widehat a(\widehat u, \widehat v) :=
			\int_{\widehat \Omega} \nabla \widehat u(\widehat x) \cdot \nabla \widehat v(\widehat x) \dhx.
\]
The condition number of $\widehat A^{-1} A$ is bounded independent of the grid size $h$ and the spline degree $p$, specifically
$$\kappa( \widehat{A}^{-1} A) \leq \frac{ \sup |\det  J_{\mathbf G}| \sigma_{\max}^2 \left( J_{\mathbf G}^{-1} \right) }{\inf |\det  J_{\mathbf G}| \sigma_{\min}^2 \left( J_{\mathbf G}^{-1} \right) }, $$
where $\sigma_{\max}\left(\cdot \right)$ and $\sigma_{\min}\left(\cdot \right)$ denote the maximum and minimum singular values of a matrix, see, e.g. \cite{Sangalli2016}. The above inequality means that $\widehat A$ is a robust preconditioner for the problem~\eqref{eq:linsys} with respect to the spline degree $p$ and the grid size $h$. In the remainder of this section, we show how the application of $\widehat A^{-1}$ to a vector can be realized efficiently.

First, we observe that the tensor-product structure yields
\begin{align*}
	 & \widehat A_{i+m_1(s-1),j+m_1(t-1)}
			= \int_{\widehat \Omega} \nabla \mathbf B_{j+m_1(t-1)}(\widehat x) \cdot \nabla \mathbf B_{i+m_1(s-1)}(\widehat x) \dhx \\
			& =
			\int_0^1\int_0^1 \big( \tfrac{\partial}{\partial x} B_{j,1}(x) B_{t,2}(y) \tfrac{\partial}{\partial x} B_{i,1}(x) B_{s,2}(y)
			+
			\tfrac{\partial}{\partial y} B_{j,1}(x) B_{t,2}(y) \tfrac{\partial}{\partial y}  B_{i,1}(x) B_{s,2}(y) \big) \dx \dy \\
			&  =
			\underbrace{\int_0^1 B_{j,1}'(x)  B_{i,1}'(x) \dx}_{\displaystyle = [K_1]_{i,j}}
			\underbrace{\int_0^1 B_{t,2}(y)   B_{s,2}(y) \dy}_{\displaystyle = [M_2]_{s,t}}
			+
			\underbrace{\int_0^1 B_{j,1}(x)  B_{i,1}(x) \dx}_{\displaystyle = [M_1]_{i,j}}
			\underbrace{	\int_0^1 B_{t,2}'(y)   B_{s,2}'(y) \dy}_{\displaystyle = [K_2]_{s,t}},
\end{align*}
where, for $k=1,2,$ $M_k$ and $K_k$ represent the mass and stiffness matrices associated with the univariate spline space $S_{p,h,\Bdy_k}$. These matrices are symmetric. In addition to that, $M_k$ is always positive definite, while $K_k$ is positive definite unless $\Bdy_k = \emptyset$, in which case it is positive semidefinite and singular.
The above relation can be written using the Kronecker product as follows:
\begin{equation} \label{eq:system_matrix}
			\widehat A = K_1 \otimes M_2 + M_1 \otimes K_2.
\end{equation}
This matrix is diagonalized as follows. For $k=1,2$, one computes a generalized eigenvalue decomposition, i.e., one finds an $M_k-$orthonormal basis of eigenvectors $\mathbf q_1^{(k)},\ldots,\mathbf q_{m_k}^{(k)}$ and the corresponding eigenvalues $\lambda_1^{(k)},\ldots,\lambda_{m_k}^{(k)}$, i.e., such that
\begin{equation}\label{eq:eigendecomposition}	K_k \mathbf q_j^{(k)} = \lambda_j^{(k)} M_k \mathbf q_j^{(k)} \qquad \mbox{and}\qquad \mathbf q_j^{(k)} \ne 0. \end{equation}
and
\[ \left( \mathbf q_j^{(k)} \right)^\top M_k \mathbf q_i^{(k)} = \delta_{i,j}, \qquad \mbox{and}\qquad \left(\mathbf q_j^{(k)}\right)^\top K_k \mathbf q_i^{(k)} = \lambda_i^{(k)} \delta_{i,j}, \]
where $\delta_{i,j}$ is the Kronecker delta ($\delta_{i,i}=1$ and $\delta_{i,j}=0$ for $i\ne j$). 
By collecting the eigenvectors and the eigenvalues into matrices
\[ Q_k := ( \mathbf q_1^{(k)}\; \mathbf q_2^{(k)}\; \cdots \mathbf q_{m_k}^{(k)} )\in \mathbb R^{m_k\times m_k}, \qquad \Lambda_k := \begin{pmatrix} \lambda_1^{(k)} \\&\ddots \\&&\lambda_{m_k}^{(k)}\end{pmatrix},\]
we can diagonalize the mass and stiffness matrices:
\begin{equation} \label{eq:eigedecomposition}
		Q^\top_k M_k Q_k = I,
		\qquad
		Q_k^\top K_k Q_k = \Lambda_k.
\end{equation}
Consequently, we have
\[ 
		\widehat A
		=
		(Q_2^{-\top} \otimes Q_1^{-\top} )
		(
			\Lambda_2 \otimes I + I \otimes \Lambda_{1}
		)
		(Q_2^{-1} \otimes Q_1^{-1})
\]
and therefore
\begin{equation} \begin{aligned} \label{eq:exact_diagonalization}
		\widehat A^{-1}
		& =
		(Q_2 \otimes Q_1)
		(
			\Lambda_2 \otimes I + I \otimes \Lambda_1
		)^{-1}
		(Q_2^\top \otimes Q_1^\top) \\
		& =
		(Q_2^\top \otimes I)(I \otimes Q_1^\top)
		(
			\Lambda_2 \otimes I + I \otimes \Lambda_1
		)^{-1}
		(I \otimes Q_1)(Q_2 \otimes I).
\end{aligned} \end{equation}
For the application of $\widehat A^{-1}$ to some vector $\mathbf r$, one has first to determine
$(Q_2 \otimes I)\mathbf r$, which is nothing else than multiplying $m_1$ chunks of $\mathbf r$ with the matrix $Q_2$. Since $Q_2$ is a dense $m_2\times m_2$ matrix, these $m_2$ matrix-vector multiplications can be realized with a computational complexity of $\mathcal O(m_2^2 m_1)$. Similarly, the complexity of multiplication with $(I \otimes Q_1)$ is $\mathcal O(m_2 m_1^{2})$, and the multiplications with their transposes are completely analogous. Note also that the multiplication with $(\Lambda_2 \otimes I + I \otimes \Lambda_1 )^{-1}$ is just a diagonal scaling.

In conclusion, since $m_1 \approx m_2$ the application of the FD preconditioner can be realized with a computational complexity of $\mathcal O\left(N_{dof}^{3/2}\right)$ in the two-dimensional case, or $\mathcal O\left( N_{dof}^{1+1/d} \right)$ for the general $d$-dimensional case. While this approach is fast in practice, it is theoretically suboptimal since its complexity does not scale linearly with respect to the number of unknowns $N_{dof}$.

In light of this, we aim at replacing the exact diagonalization by an approximate one, whose exact choice will be discussed in the next section. Here, for $k=1,2$, we generically consider a matrix $\widetilde{Q}_k \approx Q_k$ that we assume to be $M_k-$orthogonal, i.e.
\begin{equation} \label{eq:orthogonality} \widetilde{Q}_k^\top M_k \widetilde{Q}_k = I \end{equation}
and a diagonal matrix $\widetilde{\Lambda}_k \approx \Lambda_k$. The resulting preconditioner is then
\begin{equation} \label{eq:preconditioner} \mathcal P^{-1} = \left( \widetilde{Q}_2 \otimes \widetilde{Q}_1 \right) \left( \widetilde{\Lambda}_2 \otimes I + I \otimes \widetilde{\Lambda}_1 \right)^{-1} \left( \widetilde{Q}_2^{\top} \otimes \widetilde{Q}_1^{\top} \right) 
\end{equation}
The quality of such preconditioner is characterized in the next result.
\begin{proposition} \label{prop:condition_bound}
For $k=1,2,$ let $\widetilde{K}_k= \widetilde{Q}_k^{-\top} \widetilde{\Lambda}_k \widetilde{Q}_k^{-1}$ and assume there are constants $c_k,C_k > 0$ such that
$$ c_k \, \mathbf v^\top \widetilde{K}_k \mathbf v \leq \mathbf v^\top K_k \mathbf v \leq C_k \,  \mathbf v^\top \widetilde{K}_k \mathbf v \quad\mbox{for all}\quad \mathbf v \in \mathbb{R}^{m_k}. $$
Then it holds
$$ \kappa\left(\mathcal P^{-1} \widehat{A}_h \right) \leq \max_{k=1,2}  \frac{C_k}{c_k} $$
\end{proposition}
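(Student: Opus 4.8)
The plan is to reduce the condition number to a two-sided estimate between the quadratic forms $\mathbf x^\top\widehat A\mathbf x$ and $\mathbf x^\top\mathcal P\mathbf x$. Both matrices are symmetric and, since $\Gamma_\Bdy$ has positive measure forces at least one $K_k$ (hence, via the upper bound in the hypothesis, also $\widetilde K_k$) to be positive definite, both are positive definite. Therefore $\kappa(\mathcal P^{-1}\widehat A)=\mu_{\max}/\mu_{\min}$, where $\mu_{\min},\mu_{\max}$ are the extreme generalized eigenvalues of the pencil $(\widehat A,\mathcal P)$, i.e. the infimum and supremum of the Rayleigh quotient $\mathbf x^\top\widehat A\mathbf x/\mathbf x^\top\mathcal P\mathbf x$ over $\mathbf x\neq 0$. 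It thus suffices to produce constants $c,C>0$ with $c\,\mathbf x^\top\mathcal P\mathbf x\le\mathbf x^\top\widehat A\mathbf x\le C\,\mathbf x^\top\mathcal P\mathbf x$, which then gives $\kappa(\mathcal P^{-1}\widehat A)\le C/c$.

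First I would rewrite $\mathcal P$ in the same Kronecker-sum form as $\widehat A$. Inverting~\eqref{eq:preconditioner} and using the orthogonality~\eqref{eq:orthogonality}, which yields $M_k=\widetilde Q_k^{-\top}\widetilde Q_k^{-1}$, together with $\widetilde K_k=\widetilde Q_k^{-\top}\widetilde\Lambda_k\widetilde Q_k^{-1}$, one obtains
\[
\mathcal P = \widetilde K_1\otimes M_2 + M_1\otimes\widetilde K_2,
\]
which is exactly~\eqref{eq:system_matrix} with each $K_k$ replaced by $\widetilde K_k$. Note that $\widetilde K_k$ is positive semidefinite: the upper bound $\mathbf v^\top K_k\mathbf v\le C_k\,\mathbf v^\top\widetilde K_k\mathbf v$ forces $\mathbf v^\top\widetilde K_k\mathbf v\ge C_k^{-1}\mathbf v^\top K_k\mathbf v\ge 0$. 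Hence the two summands of $\mathcal P$ are each positive semidefinite, since a Kronecker product of positive semidefinite matrices is positive semidefinite.

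The core step is to tensor the one-dimensional spectral equivalences up to the full space. Writing $\mathbf x^\top\widehat A\mathbf x=\mathbf x^\top(K_1\otimes M_2)\mathbf x+\mathbf x^\top(M_1\otimes K_2)\mathbf x$, the hypothesis applied after tensoring with the common, positive semidefinite mass matrices gives
\[
c_1\,\mathbf x^\top(\widetilde K_1\otimes M_2)\mathbf x\le\mathbf x^\top(K_1\otimes M_2)\mathbf x\le C_1\,\mathbf x^\top(\widetilde K_1\otimes M_2)\mathbf x,
\]
and the analogous chain for $M_1\otimes\widetilde K_2$ with constants $c_2,C_2$. Because $\mathbf x^\top(\widetilde K_1\otimes M_2)\mathbf x$ and $\mathbf x^\top(M_1\otimes\widetilde K_2)\mathbf x$ are nonnegative and sum to $\mathbf x^\top\mathcal P\mathbf x$, adding the two chains and bounding the per-direction constants by $\min_k c_k$ from below and $\max_k C_k$ from above yields $\min_k c_k\,\mathbf x^\top\mathcal P\mathbf x\le\mathbf x^\top\widehat A\mathbf x\le\max_k C_k\,\mathbf x^\top\mathcal P\mathbf x$, hence $\kappa(\mathcal P^{-1}\widehat A)\le\frac{\max_k C_k}{\min_k c_k}$.

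The delicate point — which I expect to be the main obstacle — is the passage to the sharper stated constant $\max_k C_k/c_k$. The monotonicity argument couples the two directions only through the sum, so in general it delivers $\frac{\max_k C_k}{\min_k c_k}$, and I do not expect this to be improvable to $\max_k C_k/c_k$ without extra hypotheses: already for diagonal $\widetilde\Lambda_k$ and $\widehat K_k:=\widetilde Q_k^\top K_k\widetilde Q_k$ one can arrange that the smallest and largest Rayleigh values are attained on different tensor modes, so that $\mu_{\max}/\mu_{\min}$ approaches $\frac{\max_k C_k}{\min_k c_k}$ rather than $\max_k C_k/c_k$. The two expressions do coincide precisely when the equivalence constants can be taken common to both directions, i.e. $c_1=c_2$ and $C_1=C_2$, which is natural here given identical degree and mesh in the two parametric directions; otherwise I would state the result with the constant $\frac{\max_k C_k}{\min_k c_k}$, or equivalently set $c:=\min_k c_k$ and $C:=\max_k C_k$ at the outset.
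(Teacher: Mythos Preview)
Your argument is the same as the paper's: invert $\mathcal P$ to the Kronecker-sum form, tensor up the one-dimensional spectral equivalences against the positive (semi)definite mass matrices, and bound the generalized Rayleigh quotient from above and below. The paper's proof is terse at the last step (``these inequalities can be used to bound from above and below the Rayleigh quotient \ldots from which we deduce the desired result''), but what it actually establishes is exactly your bound $\kappa(\mathcal P^{-1}\widehat A)\le \max_k C_k/\min_k c_k$.

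Your observation about the stated constant is correct and worth keeping. The bound $\max_k C_k/c_k$ in the proposition is in general too optimistic: take $M_1=M_2=I$, $\widetilde Q_1=\widetilde Q_2=I$, $\widetilde K_1=\mathrm{diag}(1,\epsilon)$, $K_1=\mathrm{diag}(100,10\epsilon)$ (so $c_1=10$, $C_1=100$) and $\widetilde K_2=\mathrm{diag}(\epsilon,1)$, $K_2=\mathrm{diag}(\epsilon,10)$ (so $c_2=1$, $C_2=10$). Then the four eigenvalues of $\mathcal P^{-1}\widehat A$ tend, as $\epsilon\to 0$, to $100$, $55$, $5.5$, $10$, giving $\kappa\to 100/5.5>18$, while $\max_k C_k/c_k=10$. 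So the proposition as stated is false, and your weaker constant $\max_k C_k/\min_k c_k$ is the right one. As you also note, this does not affect the paper's application, since Theorem~\ref{thm:stable_splitting} supplies the \emph{same} constants $c,C$ in every direction, in which case the two expressions coincide.
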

\begin{proof}
From \eqref{eq:orthogonality} we infer that $M_k = \widetilde{Q}^{-\top}_k \widetilde{Q}_k^{-1}$ for $k=1,2$.
Then inverting \eqref{eq:preconditioner} we obtain
\begin{equation} \label{eq:preconditioner2} \mathcal P = \widetilde{K}_2 \otimes M_1 + M_2 \otimes \widetilde{K}_1 \end{equation}
Since all involved matrices are symmetric, thanks to the properties of the Kronecker product it holds
$$ c_1 \, \mathbf v^\top  \left( M_2 \otimes \widetilde{K}_1 \right) \mathbf v \leq \mathbf v^\top \left( M_2 \otimes K_1 \right) \mathbf v \leq C_1 \, \mathbf v^\top \left( M_2 \otimes \widetilde{K}_1 \right) \mathbf v \quad\mbox{for all}\quad \mathbf v \in \mathbb{R}^{N_{dof}},  $$
and similarly
$$ c_2 \, \mathbf v^\top \left( \widetilde{K}_2 \otimes M_1 \right) \mathbf v \leq \mathbf v^\top \left( K_2 \otimes M_1 \right) \mathbf v \leq C_2 \,\mathbf v^\top \left( \widetilde{K}_2 \otimes M_1 \right)\mathbf v \quad\mbox{for all}\quad\mathbf v \in \mathbb{R}^{N_{dof}}. $$
These inequalities can be used to bound from above and below the Rayleigh quotient $ \left(\mathbf v^\top \widehat{A}_h \mathbf v \right) \slash \left( \mathbf v^\top \mathcal P \mathbf v \right)$, for $\mathbf v \in \mathbb{R}^{N_{dof}}$, from which we deduce the desired result.
\end{proof}

\section{Approximate eigendecomposition based on space splitting}
\label{sec:splitting}

In this section, we discuss how to construct an approximate diagonalization for the mass and stiffness matrices on the univariate spline space $S_{p,h,\Bdy_k}$, for $k=1,2$. For ease of notation, in the following we drop the subscript $k$ that identifies the parametric direction.

We consider the following splitting of the spline space \cite{Takacs2016,Hofreither2017}
\begin{equation}\label{eq:splitting} 
    S_{p,h,\Bdy} = S_{p,h,\Bdy}^{reg} \oplus S_{p,h,\Bdy}^{out},
\end{equation}
where
\begin{equation}  \label{eq:reg_space} 
    S_{p,h,\Bdy}^{reg} =  \left\{ v_h \in S_{p,h,\Bdy} :
    \begin{array}{l}\partial^{2k} v_h(x) = 0 \mbox{ for } x \in \Bdy,  \; k = 1,\ldots, \lfloor \tfrac{p-1}{2} \rfloor,  \\ 
    \partial^{2k+1} v_h(x) = 0 \mbox{ for } x \in \{0,1 \} \setminus \Bdy, \; k = 0,\ldots, \lfloor \tfrac{p}{2} \rfloor.
    \end{array}\right\},
\end{equation}
$\lfloor \cdot \rfloor$  denotes the integer part of a number, and $ S_{p,h,\Bdy}^{out} = \left( S_{p,h,\Bdy}^{reg} \right)^{\perp L^2} $. In other words, $S_{p,h,\Bdy}^{reg}$ is the subspace of $S_{p,h,\Bdy}$ with null even derivatives (up to order $p-1$) at the extremes associated with Dirichlet boundary conditions, and null odd derivatives (again, up to order $p-1$) at the other extremes.
Defining $n_{reg} := \dim \left( S_{p,h}^{reg} \right)$
and $n_{out} := \dim \left( S_{p,h}^{out} \right)$, we observe that
\begin{equation} \label{eq:reg_space_dim} n_{reg} = 		\begin{cases}
		n-1 & \mbox{if } p \mbox{ is odd, } \Bdy = \{ 0,1\}, \\
		n+1 & \mbox{if } p \mbox{ is odd, } \Bdy = \emptyset, \\
        n & \mbox{otherwise.} \\
		\end{cases},  \quad n_{out} = \begin{cases}
		p-2 & \mbox{if } p \mbox{ is even, } \Bdy = \{ 0,1\}, \\
		p & \mbox{if } p \mbox{ is even, } \Bdy = \emptyset, \\
        p-1 & \mbox{otherwise.} \\
		\end{cases}
\end{equation}
The space $S_{p,h,\Bdy}^{reg}$ represents the large, regular part of $S_{p,h,\Bdy}$. It has been observed (see \cite{Hiemstra2021,Deng2023,Lamsahel2025}) that $S_{p,h,\Bdy}^{reg}$ is not affected by the presence of spurious eigenpairs that are common to IGA  discretizations \cite{Cottrell2006}. These outliers are therefore contained in its orthogonal complement $S_{p,h,\Bdy}^{out}$.
Note that if $p=1$, or if $p=2$ and $\Bdy = \{ 0,1\}$, then 
$S_{p,h,\Bdy} = S_{p,h,\Bdy}^{reg}$.
Since the splitting is $L^2$-orthogonal, we have
\[
    \| v_h \|_{L^2(0,1)}^2 = \| v_{h}^{reg} \|_{L^2(0,1)}^2 + \| v_{h}^{out} \|_{L^2(0,1)}^2.
\]
The next theorem guarantees that the splitting is also stable in the $H^1$-seminorm.

\begin{theorem} \label{thm:stable_splitting}
There exist constants $c, C > 0 $ independent of $h$, $p$, and $\Bdy$ such that for every $v_h \in S_{p,h,\Bdy}$ 
$$ c \left( \vert v_{h}^{reg} \vert_{H^1(0,1)}^2 + \vert v_{h}^{out} \vert_{H^1(0,1)}^2 \right) \leq \vert v_h \vert_{H^1(0,1)}^2 \leq C \left( \vert v_{h}^{reg} \vert_{H^1(0,1)}^2 + \vert v_{h}^{out} \vert_{H^1(0,1)}^2 \right) $$
holds
with $v_{h}^{reg} \in S_{p,h,\Bdy}^{reg} $ and  $v_{h}^{out} \in S_{p,h,\Bdy}^{out} $ such that $v_h = v_{h}^{reg} + v_{h}^{out}$.
\end{theorem}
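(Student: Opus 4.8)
The plan is to reduce the two-sided estimate to a single strengthened Cauchy–Schwarz inequality between the two summands. Writing $v_h = v_h^{reg} + v_h^{out}$ and expanding the seminorm gives
\[
|v_h|_{H^1(0,1)}^2 = |v_h^{reg}|_{H^1(0,1)}^2 + 2\,\langle (v_h^{reg})', (v_h^{out})'\rangle_{L^2(0,1)} + |v_h^{out}|_{H^1(0,1)}^2 ,
\]
so both inequalities follow at once if I can exhibit a constant $\gamma \in [0,1)$, independent of $h$, $p$ and $\Bdy$, with $|\langle (v_h^{reg})', (v_h^{out})'\rangle_{L^2}| \le \gamma\, |v_h^{reg}|_{H^1}\, |v_h^{out}|_{H^1}$; the theorem then holds with $c = 1-\gamma$ and $C = 1+\gamma$. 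The upper bound is in fact cheap, since plain Cauchy–Schwarz already yields $C = 2$, so all the substance lies in the lower bound, i.e.\ in showing that the cross term cannot absorb the full product of the two seminorms.

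The first concrete step is to rewrite the cross term by integration by parts as
\[
\langle (v_h^{reg})', (v_h^{out})'\rangle_{L^2} = \bigl[(v_h^{reg})'\, v_h^{out}\bigr]_0^1 - \langle (v_h^{reg})'', v_h^{out}\rangle_{L^2},
\]
and to note that the boundary contribution vanishes termwise: at each endpoint in $\Bdy$ we have $v_h^{out}=0$ because $v_h^{out}\in S_{p,h,\Bdy}$, while at each endpoint in $\{0,1\}\setminus\Bdy$ the odd-derivative condition of \eqref{eq:reg_space} (with $k=0$) forces $(v_h^{reg})'=0$. Hence the cross term equals $-\langle (v_h^{reg})'', v_h^{out}\rangle_{L^2}$. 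Next I would exploit that $v_h^{out}$ is $L^2$-orthogonal to all of $S_{p,h,\Bdy}^{reg}$: denoting by $P^{reg}$ the $L^2$-projection onto $S_{p,h,\Bdy}^{reg}$, the value is unchanged when $(v_h^{reg})''$ is replaced by its remainder, so that
\[
|\langle (v_h^{reg})', (v_h^{out})'\rangle_{L^2}| \le \|(I-P^{reg})(v_h^{reg})''\|_{L^2}\,\|v_h^{out}\|_{L^2}.
\]
This isolates the two mechanisms that should make $\gamma<1$: the factor $\|v_h^{out}\|_{L^2}$ is small relative to $|v_h^{out}|_{H^1}$ because outlier functions carry large Rayleigh quotients, while $\|(I-P^{reg})(v_h^{reg})''\|_{L^2}$ is small relative to $|v_h^{reg}|_{H^1}$ because a second derivative of a regular spline stays close to the regular subspace.

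The genuinely hard step is to turn these two heuristics into bounds that are robust in $p$, not merely in $h$. A dimensional count is encouraging: an inverse estimate relating second to first derivatives carries a factor $h^{-1}$ that cancels against the $\sqrt{\lambda}$ arising from the large outlier eigenvalues, so $h$-robustness should come out automatically; what remains is to control the residual $p$-dependent factors and force their product below $1$. Concretely I would need (i) a $p$-robust lower bound on the Rayleigh quotient over $S_{p,h,\Bdy}^{out}$, quantifying the spectral separation of $S^{out}$ from $S^{reg}$, and (ii) a $p$-robust estimate of the remainder $\|(I-P^{reg})(v_h^{reg})''\|_{L^2}$ against $|v_h^{reg}|_{H^1}$. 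Both ingredients rest on the fine structure of the B-spline spaces, in particular on the exponentially localized shape of the outlier functions and on $p$-robust inverse inequalities, which is exactly the delicate analysis carried out for this splitting in \cite{Takacs2016,Hofreither2017}. I therefore expect the argument to proceed by reducing, endpoint by endpoint, to the finitely many canonical configurations underlying \eqref{eq:reg_space_dim} and extracting the uniform $\gamma$ from those localized estimates; keeping every constant independent of $p$, rather than only of $h$, is the main obstacle.
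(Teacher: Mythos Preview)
Your reduction to a strengthened Cauchy--Schwarz inequality is sound, and the integration-by-parts step with vanishing boundary terms is correct. The genuine gap is what comes after. To conclude you need $\gamma<1$, and via your bound this means the \emph{product} of the two constants (from (i) and (ii)) must be strictly less than $1$, not merely bounded. Ingredient~(i) does follow from the paper's Lemma~\ref{lem:approxerror}: since $v_h^{out}\perp_{L^2}S^{reg}$ and $\Pi_{p,h,\Bdy}v_h^{out}\in S^{reg}$, one has $\|v_h^{out}\|_{L^2}\le\|v_h^{out}-\Pi_{p,h,\Bdy}v_h^{out}\|_{L^2}\le\frac{h}{\pi}|v_h^{out}|_{H^1}$. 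But ingredient~(ii), a $p$-robust bound on $\|(I-P^{reg})(v_h^{reg})''\|_{L^2}$ with constant below~$\pi/h$, is not established anywhere in the references you cite, and you offer no argument for it beyond ``exponentially localized shape of the outlier functions''. This is not a detail to be filled in; it is the entire content of the theorem restated in a form that is, if anything, harder.

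The paper sidesteps this difficulty by abandoning the CBS framing altogether. Instead of bounding the cross term, it shows directly that the $L^2$-projection $Q_{p,h,\Bdy}$ onto $S^{reg}$ is $H^1$-stable, by comparing it to the $H^1$-orthogonal projection $\Pi_{p,h,\Bdy}$ onto the same subspace (which is trivially $H^1$-stable). The difference $\Pi_{p,h,\Bdy}v_h-Q_{p,h,\Bdy}v_h$ lies in $S^{reg}$, so the $p$-robust inverse inequality $|w_h|_{H^1}\le 2\sqrt{3}\,h^{-1}\|w_h\|_{L^2}$ on $S^{reg}$ (this is where outlier removal pays off) converts it to an $L^2$ quantity, which is then controlled by the approximation estimate of Lemma~\ref{lem:approxerror}. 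The upshot is $|v_h^{reg}|_{H^1}\le(1+4\sqrt{3}/\pi)|v_h|_{H^1}$, and the rest follows by triangle and Young inequalities. The key advantage is that the paper only needs each constant to be \emph{bounded}, never that a product be \emph{less than one}; your route imposes the stronger requirement without a mechanism to meet it.
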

A proof for the case $\Bdy=\emptyset$ has already been given in \cite[Theorem 4]{Hofreither2017}. For completeness, we give a proof for general $\Bdy$ in the Appendix.

Let $V_{reg} \in \mathbb{R}^{m \times n_{reg}}$ and $V_{out} \in \mathbb{R}^{m \times n_{out}}$ be matrices whose columns form a coordinate basis for $S_{p,h,\Bdy}^{reg}$ and $S_{p,h,\Bdy}^{out}$, respectively, with respect to the standard basis of $S_{p,h,\Bdy}$. We restrict the mass and the stiffness matrices on $S_{p,h,\Bdy}^{reg}$ and $S_{p,h,\Bdy}^{out}$ and consider their generalized eigendecompositions
\begin{equation} \label{eq:eigendecomposition_reg} Q_{reg}^{\top} \left(V_{reg}^{\top} K V_{reg}\right) Q_{reg} = \Lambda_{reg}, \qquad Q_{reg}^{\top} \left(V_{reg}^{\top} M V_{reg}\right) Q_{reg} = I \end{equation}
\begin{equation} \label{eq:eigendecomposition_out} Q_{out}^{\top} \left(V_{out}^{\top} K V_{out}\right) Q_{out}  = \Lambda_{out}, \qquad Q_{out}^{\top} \left(V_{out}^{\top} M V_{out}\right) Q_{out} = I \end{equation}
where $ Q_{reg} \in \mathbb{R}^{n_{reg} \times n_{reg}}$ and $ Q_{out} \in \mathbb{R}^{n_{out} \times n_{out}}$ are the (normalized) eigenvector matrices, while $ \Lambda_{reg} \in \mathbb{R}^{n_{reg} \times n_{reg}}$ and $ \Lambda_{out} \in \mathbb{R}^{n_{out} \times n_{out}}$ are the diagonal matrices of eigenvalues.

Then the eigendecomposition \eqref{eq:eigendecomposition} is approximated as follows:
$$ Q \approx \widetilde{Q} := \begin{bmatrix} V_{reg} & V_{out} \end{bmatrix} \begin{bmatrix} Q_{reg} & 0 \\ 0 & Q_{out} \end{bmatrix} 
\quad\text{ and }\quad \Lambda \approx \widetilde{\Lambda} = \begin{bmatrix} \Lambda_{reg} & 0 \\ 0 & \Lambda_{out} \end{bmatrix}.  $$

If this approximate eigendecomposition is considered for the preconditioner \eqref{eq:preconditioner}, then the bound on the condition number given by Proposition \ref{prop:condition_bound} is robust with respect to $h$ and $p$. This is a direct consequence of Theorem \ref{thm:stable_splitting}, as formalized in the next result.

\begin{corollary}
Let $\widetilde{K} = \widetilde{Q}^{-\top} \widetilde{\Lambda} \widetilde{Q}^{-1}$. Then it holds
$$ c \, \mathbf v^{\top} \widetilde{K} \mathbf v \leq \mathbf v^{\top} K \mathbf v \leq C \, v^{\top} \widetilde{K} \mathbf v \quad \mbox{for all} \quad \mathbf  v \in \mathbb{R}^m ,$$
where $c$ and $C$ are the constants appearing in Theorem \ref{thm:stable_splitting}.
\end{corollary}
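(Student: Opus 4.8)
The plan is to reduce the claimed matrix inequality to the function-space estimate of Theorem~\ref{thm:stable_splitting} by writing out the two quadratic forms $\mathbf v^\top K \mathbf v$ and $\mathbf v^\top \widetilde K \mathbf v$ explicitly. Abbreviating $V := [\,V_{reg}\; V_{out}\,]$ and $Q_{bl} := \mathrm{diag}(Q_{reg}, Q_{out})$, we have $\widetilde Q = V Q_{bl}$. Since the columns of $V_{reg}$ and $V_{out}$ together form a basis of $S_{p,h,\Bdy}$, the matrix $V$ is invertible, and $Q_{bl}$ is invertible as a block-diagonal collection of (generalized) eigenvector matrices; hence $\widetilde Q$ is invertible and $\widetilde K = \widetilde Q^{-\top}\widetilde\Lambda\,\widetilde Q^{-1}$ is well defined.

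The key algebraic step is to show that $\widetilde K$ is block-diagonal with respect to the splitting. Substituting $\widetilde Q^{-1} = Q_{bl}^{-1} V^{-1}$ gives
\[
  \widetilde K
  = V^{-\top}\, Q_{bl}^{-\top}\,\widetilde\Lambda\, Q_{bl}^{-1}\, V^{-1},
\]
and, since $\widetilde\Lambda = \mathrm{diag}(\Lambda_{reg}, \Lambda_{out})$ is itself block-diagonal, the central factor becomes $\mathrm{diag}\big(Q_{reg}^{-\top}\Lambda_{reg}Q_{reg}^{-1},\, Q_{out}^{-\top}\Lambda_{out}Q_{out}^{-1}\big)$. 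By~\eqref{eq:eigendecomposition_reg} and~\eqref{eq:eigendecomposition_out} these two diagonal blocks are exactly $V_{reg}^\top K V_{reg}$ and $V_{out}^\top K V_{out}$, so that
\[
  \widetilde K
  = V^{-\top}
  \begin{bmatrix} V_{reg}^\top K V_{reg} & 0 \\ 0 & V_{out}^\top K V_{out} \end{bmatrix}
  V^{-1}.
\]

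It remains to translate both quadratic forms back into $H^1$-seminorms. Given $\mathbf v \in \mathbb R^m$, set $\mathbf w := V^{-1}\mathbf v$ and split it as $\mathbf w = (\mathbf w_{reg}, \mathbf w_{out})$ conformally with the block structure, so that $\mathbf v = V_{reg}\mathbf w_{reg} + V_{out}\mathbf w_{out}$. By construction of $V_{reg}$ and $V_{out}$ this is precisely the unique, $L^2$-orthogonal decomposition $v_h = v_h^{reg} + v_h^{out}$ of the associated spline $v_h \in S_{p,h,\Bdy}$ furnished by the direct sum~\eqref{eq:splitting}, with $V_{reg}\mathbf w_{reg}$ and $V_{out}\mathbf w_{out}$ the coordinate vectors of $v_h^{reg}$ and $v_h^{out}$. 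Using the block form of $\widetilde K$ we then obtain
\[
  \mathbf v^\top \widetilde K \mathbf v
  = \mathbf w_{reg}^\top V_{reg}^\top K V_{reg}\mathbf w_{reg}
  + \mathbf w_{out}^\top V_{out}^\top K V_{out}\mathbf w_{out}
  = \vert v_h^{reg}\vert_{H^1(0,1)}^2 + \vert v_h^{out}\vert_{H^1(0,1)}^2,
\]
while $\mathbf v^\top K \mathbf v = \vert v_h\vert_{H^1(0,1)}^2$ directly. The asserted inequality is now precisely Theorem~\ref{thm:stable_splitting}, with the same constants $c$ and $C$.

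The proof is essentially bookkeeping, and the only points requiring care are the invertibility of $\widetilde Q$ (so that $\widetilde K$ is defined) and the identification of the coordinate change $\mathbf v = V\mathbf w$ with the spline splitting $v_h = v_h^{reg} + v_h^{out}$. Note that the $L^2$-orthogonality of the splitting is not needed for the inequality itself, but only to guarantee that this is the same decomposition to which Theorem~\ref{thm:stable_splitting} applies. I expect no genuine obstacle beyond keeping the transposes and inverses of the change-of-basis matrices straight.
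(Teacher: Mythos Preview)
Your proof is correct and follows essentially the same approach as the paper's own proof: both invert the change-of-basis $V=[\,V_{reg}\;V_{out}\,]$, use the block eigendecompositions~\eqref{eq:eigendecomposition_reg}--\eqref{eq:eigendecomposition_out} to identify $\mathbf v^\top\widetilde K\mathbf v$ with $|v_h^{reg}|_{H^1}^2+|v_h^{out}|_{H^1}^2$, and then invoke Theorem~\ref{thm:stable_splitting}. Your version is slightly more explicit about the invertibility of $\widetilde Q$ and the block-diagonal structure of $\widetilde K$, but there is no substantive difference.
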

\begin{proof}
Given $\mathbf  v \in \mathbb{R}^{m},$ we write $ \begin{bmatrix} V_{reg} & V_{out} \end{bmatrix}^{-1}\mathbf v = \begin{bmatrix} \mathbf v_{reg} \\ \mathbf v_{out} \end{bmatrix},$ where $\mathbf v_{reg} \in \mathbb{R}^{n_{reg}}$ and $\mathbf u_{out} \in \mathbb{R}^{n_{out}}$ satisfy $\mathbf v = V_{reg} \mathbf v_{reg} + V_{out} \mathbf v_{out} $. Then it holds
\begin{align*} \mathbf v^{\top} \widetilde{K} \mathbf v & = \mathbf v_{reg}^{\top} Q_{reg}^{-\top} \Lambda_{reg} Q_{reg}^{-1} \mathbf v_{reg} + \mathbf v_{out}^{\top} Q_{out}^{-\top} \Lambda_{out} Q_{out}^{-1} \mathbf v_{out} \\ & = \mathbf v_{reg}^{\top} \left( V_{reg}^{\top} K  V_{reg} \right) \mathbf v_{reg} + \mathbf v_{out}^{\top} \left( V_{out}^{\top} K V_{out} \right) \mathbf v_{out}, \end{align*}
where the last equality derives from \eqref{eq:eigendecomposition_reg} and \eqref{eq:eigendecomposition_out}. Since $V_{reg} \mathbf v_{reg}$ and $V_{out} \mathbf v_{out}$ represent functions belonging to $S_{p,h,\Bdy}^{reg}$ and $S_{p,h,\Bdy}^{out}$ respectively, the desired result follows from Theorem \ref{thm:stable_splitting}.
\end{proof}

We conclude this section with some details on the numerical realization of the preconditioner.
We first observe that B-splines that are supported on $p+1$ elements have $p-1$ vanishing derivatives at the extremes of their support (since they are $C^{p-1}$ at those points) and thus belong to $S_{p,h,\Bdy}^{reg}$. Therefore, if $n > p$ we can assume that the matrix $V_{reg}$ has the block structure 
$$ V_{reg} = \begin{bmatrix} V_{reg,0} & 0 & 0 \\ 0 & I & 0 \\ 0 & 0 &  V_{reg,1} \end{bmatrix}, $$
with $V_{reg,0}, V_{reg,1} \in \mathbb{R}^{\lfloor (p-1)/2 \rfloor \times p}$ (we will discuss a specific choice for the basis of $S_{p,h,\Bdy}^{reg}$ in Section \ref{sec:special_basis}).
Then the $L^2$ orthogonality condition $V_{reg}^{\top} M V_{out} = 0$ implies that we can choose
$$ V_{out} = M^{-1} \begin{bmatrix} V_{reg,0}(V_{reg,0}^{\top} V_{reg,0})^{-1} & 0 \\ 0 & 0 \\ 0 &  V_{reg,1}(V_{reg,1}^{\top} V_{reg,1})^{-1} \end{bmatrix}. $$
Note that the above formula can be used to compute $V_{out}$. Then, one can compute $V_{out}^{\top} K V_{out}$ and its eigendecomposition \eqref{eq:eigendecomposition_out}. The total cost of these operations is $\mathcal{O}(n p^2) = \mathcal O(m p^2)$ flops.

In the next sections, we will discuss an explicit expression for the matrix $Q_{reg}$ that allows to exploit the Fast Fourier Transform for computing matrix-vector products, requiring only $ \mathcal{O} \left( m \left( \log m + p \right) \right) $ flops (which can be reduced to $ \mathcal O \left( m \log m \right)$ for a particular choice of the basis, see Section \ref{sec:special_basis}). Moreover, matrix-vector products involving the dense matrix $V_{out}$ require $\mathcal O(mp)$ flops, while matrix-vector products with $V_{reg}$ and $Q_{out}$ have negligible costs.
In conclusion, a single matrix-vector product involving $\widetilde{Q}$ requires $\mathcal O \left( m \left( \log m + p\right)\right)$ flops. As a consequence, by exploiting the properties of Kronecker products, a single application of the preconditioner \eqref{eq:preconditioner} can be computed at the (almost) optimal cost $\mathcal O \left( N_{dof} \left( \log N_{dof} + p\right)\right)$ flops. We refer to this approach as Isogeometric Fast Fourier-based Diagonalization (IFFD) method.

\begin{remark} \label{rmk:reduced_regularity}
Up to this point, we have only considered splines that are $C^{p-1}$ everywhere in the parameter domain, i.e. there are no repeated internal knots.
However, the geometry parametrization is often less regular (e.g. $C^0$) on specific manifolds, making it natural to reduce the regularity of the univariate spline spaces at some internal knots.
In this case, we can still consider the space splitting \eqref{eq:splitting}, where, however
\begin{equation}\label{eq:reg_space2} 
S_{p,h,\Bdy}^{reg} =  \left\{
v_h \in S_{p,h,\Bdy} \cap C^{p-1}
:
\begin{array}{l}
\partial^{2k} v_h(x) = 0 \mbox{ for } x \in \Bdy,  \; k = 0,\ldots, \lfloor \tfrac{p-1}{2} \rfloor, \\ 
\partial^{2k+1} v_h(x) = 0 \mbox{ for } x \in \{0,1 \} \setminus \Bdy, \; k = 0,\ldots, \lfloor \tfrac{p}{2} \rfloor.
\end{array}
\right\},
\end{equation}
while as before $ S_{p,h,\Bdy}^{out} = \left( S_{p,h,\Bdy}^{reg} \right)^{\perp L^2} $.
 We emphasize that the difference between definitions \eqref{eq:reg_space} and \eqref{eq:reg_space2} is that in the latter the functions are restricted to be globally $C^{p-1}$. 
This means in particular that $S_{p,h,\Bdy}^{reg}$ does not change if repeated knots are added to $S_{p,h,\Bdy}$.
On the other hand, for each repeated internal knot that is added to $S_{p,h,\Bdy}$, the dimension of $S_{p,h,\Bdy}^{out}$ increases by one. It follows that the numerical diagonalization \eqref{eq:eigendecomposition_out}, whose cost is cubic in the dimension of $S_{p,h,\Bdy}^{out}$, is feasible only if the number of repeated knots is small or moderate. However, this is clearly not an issue if the spline space has maximum regularity outside of a few points. The effectiveness of this splitting approach is validated numerically in Section \ref{sec:num_res}. 
\end{remark}

\section{Discrete eigenfunctions on the regular space}
\label{sec:discrete_eigenfunctions}

We now derive an expression for the discrete eigenfunctions of the Laplace operator on the regular space $S_{p,h,\Bdy}^{reg}$. As a preliminary step, we consider the periodic spline space on a uniform grid over 
$(-2,2)$ 
with grid size $h$, namely
\begin{equation}\label{eq:sper}
\Sper_{p,h}	:= \left\{	v_h \in S_{p,h}(-2,2) :	\tfrac{\partial^j}{\partial x^j} v_h(-2) = \tfrac{\partial^j}{\partial x^j}v_h(2) \, \; \mbox{for} \;\, j \in \{0,\ldots,p-1\}	\right\}, 
\end{equation}
where
$ S_{p,h}(-2,2) = \left\{ v_h \in C^{p-1}[-2,2]: v_h|_{(hi,h(i+1))} \in \pi_p \, \; \mbox{for} \; \, i \in \{-2n+1,\ldots,2n\}\\[.5em] \right\}$
is the corresponding spline space. The dimension of $\Sper_{p,h}$ is independent of the spline degree, since we have 
$\dim \Sper_{p,h} = 4n$.
We define the nodes 
$x_{-2n+1}, x_{-2n+2}, \ldots, x_{2n} $, 
with
\begin{equation}  \label{eq:nodes}
    x_i :=
    \begin{cases}
              ih & \mbox{if $p$ is odd}, \\
            (i-1/2)h & \mbox{if $p$ is even}.
    \end{cases}
\end{equation}
Note that
\begin{equation} \label{eq:nodes_symmetry} x_{i} = -x_{\delta_p - i} \end{equation}
holds, where
\begin{equation} \label{eq:delta}
    \delta_p :=
    \begin{cases}
              0 & \mbox{if $p$ is odd}, \\
            1 & \mbox{if $p$ is even}.
    \end{cases}
\end{equation}
We consider the Lagrange basis functions $L_i$, for 
$i=-2n+1, \ldots,2n$, 
associated to these nodes, see \cite[Lecture 4]{Schoenberg1973}. These functions satisfy
\begin{equation}\label{eq:lagrange-bas}
    L_j\in\Sper_{p,h}
    \qquad\mbox{with}\qquad
    L_j(x_i)=\delta_{i,j},
\end{equation}
see Figure~\ref{fig3} for a visualization.
Unlike B-splines, they are supported on the whole domain and they change sign.
\begin{figure}[h]
	\centering
	\includegraphics[width=.9\textwidth]{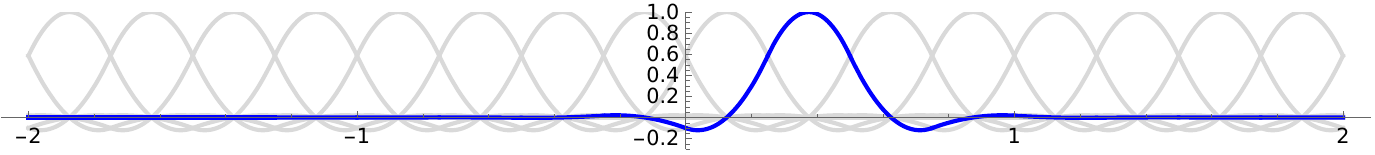}
	\caption{\label{fig3}Lagrange basis for periodic splines with degree $2$ over $(-2,2)$.}
\end{figure}

If we consider this basis, mass and stiffness matrices on $\Sper_{p,h}$ (as any matrix associated with a symmetric bilinear form) are symmetric and circulant. Therefore, their eigenvalues and eigenvectors are explicitly known. In particular, the eigenvectors have the form
\begin{equation} \label{eq:eigenvectors} \mathbf{v}^{(j)} = \begin{bmatrix} \omega^{ij} \end{bmatrix}_{i=-2n+1}^{2n} \in \mathbb{C}^{4n}, \qquad j = -2n+1,\ldots,2n, \end{equation}
where $\omega = e^{\mathbf{i} \frac{\pi}{2n}} = \cos \left( \frac{\pi}{2n} \right) + \mathbf{i} \sin \left( \frac{\pi}{2n} \right) $ is the complex $4n-$th root of unity. 
Moreover, if $\lambda_j$ denotes the eigenvalue associated to $\mathbf{v}^{(j)}$ for the considered matrix (mass or stiffness), since the latter is symmetric and circulant, it holds (see e.g. \cite{Tee2007,GolubVanLoan})
$$ \lambda_{j} = \lambda_{-j}, \qquad j=1,\ldots,2n-1. $$ 
As a consequence, any linear combination of $\mathbf{v}^{(j)}$ and $\mathbf{v}^{(-j)}$ is still an eigenvector.

We now select particular eigenvectors, based on the set $\Bdy$ that describes the boundary conditions imposed on the univariate spline space. In the following, we discuss in detail the case $\Bdy = \{ 0,1\}$, and then comment on the other cases.
For odd $p$, we consider the eigenvectors
$$ \mathbf{f}^{(j)}:=-\frac{\mathbf{i}}{2} \left( \mathbf{v}^{(2j)}- \mathbf{v}^{(-2j)} \right) = \begin{bmatrix} \sin \left( \frac{ij \pi }{n} \right)\end{bmatrix}_{i=-2n+1}^{2n}, $$
for $j=1,\ldots,n-1$. For even $p$, we instead consider the eigenvectors
$$ \mathbf{f}^{(j)} := -\frac{\mathbf{i}}{2} \left( \omega^{-j} \mathbf{v}^{(2j)} - \omega^{j} \mathbf{v}^{(-2j)} \right) = \begin{bmatrix} \sin \left( \frac{(2i-1) j  \pi }{2n} \right) \end{bmatrix}_{i=-2n+1}^{2n}, $$
for $j=1,\ldots,n-1$, and the additional eigenvector 
$$\mathbf{f}^{(n)} := \mathbf{v}^{(2n)} = \begin{bmatrix} \sin \left( (i-1/2)\pi \right) \end{bmatrix}_{i=-2n+1}^{2n} = \begin{bmatrix} (-1)^{(i-1)} \end{bmatrix}_{i=-2n+1}^{2n} . $$
In all of those cases, using~\eqref{eq:reg_space_dim}, \eqref{eq:nodes} and
\begin{equation}\label{eq:theta}
    \alpha_j := j \pi,
\end{equation}
we have  the representation
\[
    \mathbf{f}^{(j)} = \begin{bmatrix} \sin \left(   x_i \alpha_j \right)\end{bmatrix}_{i=-2n+1}^{2n}, \quad j=1,\dots,n_{reg},
\]
for the eigenvector and 
\begin{equation} \label{eq:discrete_fourier_fun}
F_j(x)= \sum_{i=-2n+1}^{2n} \sin(x_i\alpha_j) L_i(x), \qquad j = 1,\dots,n_{reg},
\end{equation}
for the corresponding eigenfunctions,
see Figure \ref{fig4} for a visualization. As eigenfunctions, they are in particular orthogonal with respect to the $L^2$ and $H^1$ scalar product.  

\begin{figure}[h]
	\centering
	\includegraphics[width=.9\textwidth]{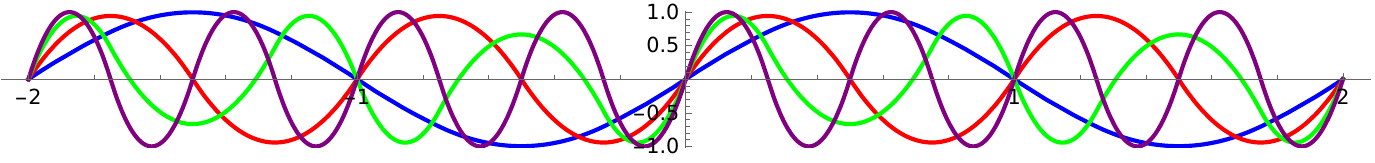}
	\caption{\label{fig4} Fourier modes with even symmetry on $\Bdy=\{0,1\}$, degree $2$ over $(-2,2)$.}
\end{figure}

We need the following lemma to proceed.
\begin{lemma} \label{lem:odd}
The discrete Fourier modes  \eqref{eq:discrete_fourier_fun} are 
odd symmetric with respect to $0$ and $1$, i.e. $F_j(-x)=-F_j(x)$ and $F_j(1-x)=-F_j(1+x)$.
\end{lemma}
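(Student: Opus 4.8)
The plan is to recognize $F_j$ as a spline interpolant and to show that the two claimed symmetries are inherited from the corresponding symmetries of the interpolated function, using that the interpolation process respects reflections. First I would observe that, by \eqref{eq:lagrange-bas}, the function $F_j$ defined in \eqref{eq:discrete_fourier_fun} is exactly the element of $\Sper_{p,h}$ that interpolates $g_j(x):=\sin(\alpha_j x)=\sin(j\pi x)$ at the nodes $x_i$; indeed $F_j(x_k)=\sin(\alpha_j x_k)=g_j(x_k)$. Since $\dim\Sper_{p,h}=4n$ equals the number of nodes and these nodes are unisolvent for $\Sper_{p,h}$ (which is precisely what the existence of the Lagrange basis \eqref{eq:lagrange-bas} expresses), this interpolant is unique. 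I will write the associated interpolation operator as $\mathcal I f:=\sum_i f(x_i)L_i$, so that $F_j=\mathcal I g_j$.

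Next I would record two elementary facts about $g_j$. Because $4\alpha_j/(2\pi)=2j\in\mathbb Z$, the function $g_j$ is $4$-periodic and may therefore be viewed as a function on the circle $\mathbb R/4\mathbb Z$, on which $\Sper_{p,h}$ naturally lives. Moreover $g_j(-x)=-g_j(x)$ and $g_j(2-x)=\sin(2j\pi-j\pi x)=-g_j(x)$, i.e. $g_j$ is odd about $0$ and about $1$.

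The core step is to show that $\mathcal I$ commutes with each of the two reflections $\rho_0:x\mapsto -x$ and $\rho_1:x\mapsto 2-x$, understood on $\mathbb R/4\mathbb Z$. For this I would verify that each reflection (i) maps the breakpoint grid $\{hi\}$ to itself and preserves the periodicity conditions, so that $\Sper_{p,h}$ is invariant under composition with $\rho$, and (ii) permutes the node set: by \eqref{eq:nodes_symmetry} one has $\rho_0(x_i)=-x_i=x_{\delta_p-i}$, and a direct computation gives $\rho_1(x_i)=2-x_i=x_{2n+\delta_p-i}$, where indices are read modulo $4n$ using $x_{i+4n}=x_i+4$ and $L_{i+4n}=L_i$. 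Granting (i)--(ii), for any continuous $f$ the function $(\mathcal I f)\circ\rho$ lies in $\Sper_{p,h}$ and agrees with $f\circ\rho$ at every node, whence by uniqueness $(\mathcal I f)\circ\rho=\mathcal I(f\circ\rho)$.

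Combining these, $F_j\circ\rho_0=\mathcal I(g_j\circ\rho_0)=\mathcal I(-g_j)=-F_j$, which is $F_j(-x)=-F_j(x)$; and $F_j\circ\rho_1=\mathcal I(g_j\circ\rho_1)=\mathcal I(-g_j)=-F_j$, i.e. $F_j(2-x)=-F_j(x)$, which upon replacing $x$ by $1+x$ is exactly $F_j(1-x)=-F_j(1+x)$. The main obstacle I anticipate is the bookkeeping in step (ii): one must treat the odd and even degree cases separately and track the index ranges, using the $4n$-periodicity of both the nodes and the Lagrange functions to confirm that $i\mapsto\delta_p-i$ and $i\mapsto 2n+\delta_p-i$ are genuine permutations of the index set modulo $4n$. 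The invariance of $\Sper_{p,h}$ under the reflections in step (i) is geometrically clear but should be stated explicitly, since it is what lets the reflected interpolant be compared with $F_j$ inside the same space.
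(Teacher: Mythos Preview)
Your argument is correct and is, at bottom, the same idea the paper uses: the reflection about $0$ (resp.\ $1$) preserves the periodic spline space and permutes the interpolation nodes, so the interpolant of an odd function is odd. The paper carries this out concretely rather than abstractly: it proves the identity $L_i(-x)=L_{\delta_p-i}(x)$ for the Lagrange functions (using that each $L_i$ is even about $x_i$ together with the shift-invariance $L_i(\cdot)=L_0(\cdot-ih)$), and then reindexes the defining sum \eqref{eq:discrete_fourier_fun} directly. Your formulation via the operator identity $(\mathcal I f)\circ\rho=\mathcal I(f\circ\rho)$ is a clean repackaging of exactly that computation; it buys you a proof that never touches individual Lagrange functions, at the cost of having to argue the invariance of $\Sper_{p,h}$ under $\rho_0,\rho_1$ and the node-permutation bookkeeping you already flagged. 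Either route is fine here; the paper's version is slightly shorter because the Lagrange identity \eqref{eq:lagrange_property} is a one-liner once the symmetry of $L_i$ is stated.
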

\begin{proof}

We recall that a Lagrange basis function $L_i$ is even symmetric with respect to $x_i$, i.e. $L_i(x) = L_i(2x_i - x)$. Thus, it holds
\begin{equation} \label{eq:lagrange_property} L_{i}\left( -x \right) = L_{i}\left( 2 x_i + x\right) = L_i\left( h(2i-\delta_p) + x \right) = L_{\delta_p - i}(x), \end{equation}
for $i=-2n+1,\ldots,2n,$ where the last equality is a consequence of periodic Lagrange basis functions being translations of each other. 
We observe that, for odd $p$, in the right-hand side of \eqref{eq:discrete_fourier_fun} the addend corresponding to $i=n$ is null and therefore it does not contribute to the sum. Thus, we have
\begin{align*} F_{j}(-x) & = \sum_{i=-2n+1}^{2n} \sin(j \pi x_i) L_i(-x) = \sum_{i=-2n+1}^{2n} \sin(- j \pi x_{\delta_p - i}) L_{\delta_p - i}(x) \\ & = - \sum_{i=-2n+1}^{2n} \sin( j\pi x_{i}) L_{i}(x) = - F_j(x), \end{align*}
where in the second equality we used \eqref{eq:nodes_symmetry} and \eqref{eq:lagrange_property}, while in the last equality we simply rearranged the terms in the sum. 
The proof of $F_j(1-x)=-F_j(1+x)$ is analogous.
\end{proof}

Since the discrete Fourier functions are odd symmetric at $0$ and at $1$, we obtain
\[
			\partial^{2k} F_j(0)=\partial^{2k} F_j(1)=0 \quad\mbox{for all}\quad k\in \{0, \cdots, \lfloor \tfrac{p-1}2 \rfloor \}.
\]
The above equations show that the restrictions of the discrete Fourier functions to the interval $[0,1]$ are elements of the space $S_{p,h,\Bdy}^{reg}$ with $\Bdy=\{0,1\}$, which for the considered boundary conditions is defined in~\eqref{eq:reg_space} as
\[
    S_{p,h,\{0,1\}}^{reg}  = \{ v_h \in S_{p,h,\{0,1\}} : \partial^{2k} v_h(0)=\partial^{2k} v_h(1) = 0 \mbox{ for } k = 1,\ldots, \lfloor \tfrac{p-1}2 \rfloor \}.
\]
With a slight abuse of notation, we still denote these restrictions with $F_j$, $j=1,\ldots,n_{reg}$. Since they are linearly independent over $[0,1]$ and their number coincides with the dimension of $S_{p,h,\Bdy}^{reg}$, they form a basis for this space, see Figure~\ref{fig5} for a visualization. 

\begin{figure}[h]
	\centering
	\includegraphics[width=.5\textwidth]{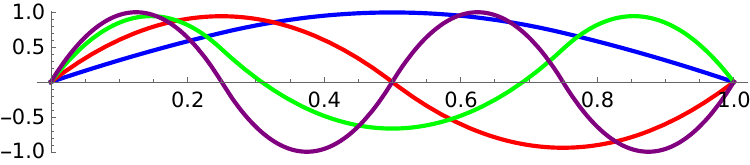}
	\caption{\label{fig5}Discrete Fourier functions as basis of $S_{p,h,\Bdy}^{reg}$ for $\Bdy=\{0,1\}$.}
\end{figure}

From \eqref{eq:discrete_fourier_fun} it immediately follows that
$$ F_{j}(x_i) = \sin(x_i\alpha_j), \qquad i,j=1,\ldots,n_{reg}, $$
with $x_i$ as in~\eqref{eq:nodes} and $\alpha_j$ as in~\eqref{eq:theta}.
Thus, each discrete Fourier mode $F_j$ is the function of $S_{p,h,\Bdy}^{reg}$ that interpolates the analytical eigenfunction of the Laplacian $\phi_j(x) = \sin(x\alpha_j)$ on the points $x_i$, $i=1,\ldots,n_{reg}$.
We remark that the uniqueness of the interpolant follows from \cite[Lecture 4, Theorem 4]{Schoenberg1973}.

We are now ready to prove the main theoretical result of this Section.

\begin{theorem} \label{thm:eigenfunctions}
The discrete Fourier functions form a basis of eigenfunctions for the mass and stiffness operators on the space $S_{p,h,\Bdy}^{reg}$.
\end{theorem}
\begin{proof}
Since the functions $F_j$, $j=1,\ldots,n_{reg}$, form a basis for $S_{p,h,\Bdy}^{reg}$, the statement is equivalent to showing that these functions are orthogonal with respect to the $L^2$ and $H^1$ inner products on the interval $[0,1]$. This follows from their orthogonality over $[-2,2]$ and Lemma \ref{lem:odd}. Indeed, since the product of odd symmetric functions is even symmetric, for every $i,j \in \left\{ 1,2, \ldots, n_{reg}\right\}, $ with $i \neq j$, we have 
		\[
				0 =	\int_{-2}^2 F_i(x)F_j(x) \mathrm dx
					=
					2\int_{0}^2 F_i(x)F_j(x) \mathrm dx
					=
					4\int_0^1 F_i(x)F_j(x) \mathrm dx.
		\]
The orthogonality for the derivatives can be proven analogously, since the derivative of an odd symmetric function is even symmetric.
\end{proof}

As a consequence of Theorem~\ref{thm:eigenfunctions}, if we fix a basis $\psi_1,\ldots,\psi_{n_{reg}}$ for $S_{p,h,\Bdy}^{reg}$, we can write the eigenvector matrix $Q_{reg}$ in \eqref{eq:eigendecomposition_reg} as
\begin{equation} \label{eq:reg_eigenvectors} 
    Q_{reg} = C^{-1}_{reg} U_{reg} D_{reg}^{-1},
\end{equation}
where $C_{reg},U_{reg},D_{reg} \in \mathbb{R}^{n_{reg} \times n_{reg}}$ are given by
$\left( C_{reg} \right)_{ij} = \psi_i \left( x_j \right) $ such that $C^{-1}_{reg}$ acts as an interpolation operator, $\left( U_{reg} \right)_{ij} = F_j(x_i) = \sin \left( x_i \alpha_j  \right) $ represents the Discrete Sine Transform and $D_{reg}$ is a diagonal matrix that normalizes the eigenvectors via
\begin{equation}\label{eq:Dreg}
\left( D_{reg} \right)_{ii} =  \|F_i\|_{L^2(0,1)}, \qquad i=1,\ldots,n_{reg}.
\end{equation}
Since the discrete Fourier modes $F_i$ are obtained by interpolating $\sin(x\alpha_i)$ by splines, $\|F_i\|_{L^2(0,1)}$ can be expressed using the mass matrix $M_{reg} = V_{reg}^T M V_{reg}$, the matrix $C^{-1}_{reg}$ representing the interpolation and the matrix $U_{reg}$ representing the Discrete Sine Transform:
\begin{equation}\label{eq:Dreg2}
\begin{aligned}
    D_{reg} &= \left( U_{reg}^T C^{-T}_{reg} V_{reg}^T M V_{reg} C^{-1}_{reg} U_{reg} \right)^{1/2}.
\end{aligned}
\end{equation}
By substituting~\eqref{eq:reg_eigenvectors} into~\eqref{eq:eigendecomposition_reg}, we obtain
\begin{equation}\label{eq:Lambda reg}
\begin{aligned}
   \Lambda_{reg} &= Q_{reg}^\top V_{reg}^\top K V_{reg} Q_{reg} = D_{reg}^{-1} U_{reg}^\top C^{-\top}_{reg} V_{reg}^\top K V_{reg} C^{-1}_{reg} U_{reg} D_{reg}^{-1}.
\end{aligned}
\end{equation}
As above, we have
\begin{equation}\label{eq:Lambda reg2}
\left( \Lambda_{reg} \right)_{ii} = |F_i|_{H^1(0,1)}^2\big/\|F_i\|_{L^2(0,1)}^2, \qquad i=1,\ldots,n_{reg}
.
\end{equation}
As already mentioned, we can assume that the basis functions of $S_{p,h,\Bdy}^{reg}$ have local support, which is equivalent to require that $C_{reg}$ is a banded matrix, and its bandwidth is roughly $p$. Therefore, assuming that its LU factors are computed in advance, applying $C_{reg}^{-1}$ to a vector requires $\mathcal O (n_{reg} p )$ flops. Moreover, the action of $U_{reg}$ on a vector corresponds to a Discrete Sine Transform (of type I or III, depending if $p$ is odd or even), a variant of the Discrete Fourier Transform which can be applied at $\mathcal O(n_{reg} \log n_{reg})$ cost. Thus, since we are assuming $n_{reg} \approx m$ where $m$ is the dimension of $S_{p,h,\Bdy}$, a single matrix-vector product with $Q_{reg}$ requires $\mathcal O \left( m \left( \log m + p \right) \right) $ flops. In Section~\ref{sec:special_basis}, we will see that for a particular choice of the basis it is possible to avoid the interpolation matrix $C_{reg}^{-1}$, further reducing the cost.

The matrices $D_{reg}$ and $\Lambda_{reg}$ can be computed using~\eqref{eq:Dreg}
and~\eqref{eq:Lambda reg}. The complexity of this computation, exploiting the sparsity of the involved matrices, is $\mathcal O (n_{reg}^2 p ) = \mathcal O (m^2 p ) $, which is typically not an issue when dealing with a $d-$dimensional problems, for $d \geq 2$. 
Alternatively, they can be computed with the following formulas (see also \cite{Lamsahel2025}).
\begin{lemma}\label{lem:coefs}
    For $j=1,\dots,n_{reg}$, 
    \begin{align*}
        (D_{reg})_{jj}
        &= h^{1/2} \frac{\big(\sum_{k=-p}^p \widetilde{\mathcal B}_{2p+1}(k) \cos (k h\alpha_j)\big)^{1/2}}
                    {\sum_{k=-\lfloor p/2\rfloor}^{\lfloor p/2\rfloor} \widetilde{\mathcal B}_{p}(k) \cos(k h \alpha_j)},
        \\
        (\Lambda_{reg})_{jj}
        &= \frac{2-2\cos(h\alpha_j)}{h}\, \frac{\sum_{k=-(p-1)}^{p-1} \widetilde{\mathcal B}_{2p-1}(k) \cos (k h\alpha_j)}{\sum_{k=-p}^p \widetilde{\mathcal B}_{2p+1}(k) \cos (k h\alpha_j)}
    \end{align*}
    holds, where $\widetilde{\mathcal B}_p$ is the symmetric cardinal B-spline of degree $p$, given by the recurrence relation
    \begin{equation}\label{eq:cardinal}
        \widetilde{\mathcal B}_p(x) =  \frac{p+1+2x}{2p} \widetilde{\mathcal B}_{p-1}(x+\tfrac12) + \frac{p+1-2x}{2p} \widetilde{\mathcal B}_{p-1}(x-\tfrac12),\quad
        \widetilde{\mathcal B}_0(x) = \begin{cases} 1 & \mbox{if}\; x\in[-\tfrac12,\tfrac12), \\ 0 & \mbox{otherwise.}\end{cases}
    \end{equation}
\end{lemma}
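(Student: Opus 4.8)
The plan is to reduce both identities to the Fourier symbol calculus of cardinal B-splines on the uniform periodic grid. I start from the two facts already established, namely $(D_{reg})_{jj}=\|F_j\|_{L^2(0,1)}$ from \eqref{eq:Dreg} and $(\Lambda_{reg})_{jj}=|F_j|_{H^1(0,1)}^2/\|F_j\|_{L^2(0,1)}^2$ from \eqref{eq:Lambda reg2}, together with the characterization of $F_j$ as the element of $\Sper_{p,h}$ that interpolates $\sin(x\alpha_j)$ at the nodes $x_i$ of \eqref{eq:nodes}. Representing $F_j$ in the scaled cardinal B-spline basis $x\mapsto\widetilde{\mathcal B}_p(x/h-l-\tfrac{\delta_p}{2})$, the interpolation conditions $F_j(x_i)=\sin(x_i\alpha_j)=\operatorname{Im}e^{\mathbf i\, i\,h\alpha_j}$ decouple under a unit-modulus coefficient ansatz: writing $\theta:=h\alpha_j$ and using that the B-spline values at the integer-shifted nodes are $\widetilde{\mathcal B}_p(k)$, the complex interpolant $G_j$ of $e^{\mathbf i\,i\theta}$ turns out to have coefficients $e^{\mathbf i\,l\theta}/\Phi_p(\theta)$, where $\Phi_p(\theta):=\sum_{k=-\lfloor p/2\rfloor}^{\lfloor p/2\rfloor}\widetilde{\mathcal B}_p(k)\cos(k\theta)$ is exactly the denominator appearing in the stated formula for $(D_{reg})_{jj}$. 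Here $F_j=\operatorname{Im}G_j$, and the half-shift $\delta_p/2$ absorbs the even/odd distinction in \eqref{eq:nodes}.

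Next I would evaluate the two quadratic forms by the classical cardinal-spline convolution identities. For the mass form I use the autocorrelation identity $\int_{\mathbb R}\widetilde{\mathcal B}_p(u-k)\widetilde{\mathcal B}_p(u)\dx=\widetilde{\mathcal B}_{2p+1}(k)$, which turns $\|G_j\|_{L^2}^2$ into the symbol $\Phi_{2p+1}(\theta)=\sum_{k=-p}^{p}\widetilde{\mathcal B}_{2p+1}(k)\cos(k\theta)$ divided by $\Phi_p(\theta)^2$. For the $H^1$-seminorm I differentiate using $\widetilde{\mathcal B}_p'=\widetilde{\mathcal B}_{p-1}(\cdot+\tfrac12)-\widetilde{\mathcal B}_{p-1}(\cdot-\tfrac12)$ and apply the same autocorrelation identity in degree $p-1$; the resulting second difference of $\widetilde{\mathcal B}_{2p-1}$ has symbol $(2-2\cos\theta)\,\Phi_{2p-1}(\theta)$ with $\Phi_{2p-1}(\theta)=\sum_{k=-(p-1)}^{p-1}\widetilde{\mathcal B}_{2p-1}(k)\cos(k\theta)$, which is the numerator in the stated formula for $(\Lambda_{reg})_{jj}$. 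The support ranges of $\widetilde{\mathcal B}_p$, $\widetilde{\mathcal B}_{2p+1}$ and $\widetilde{\mathcal B}_{2p-1}$ at the integers are precisely $\lfloor p/2\rfloor$, $p$ and $p-1$, which fixes the summation limits appearing in the lemma.

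Finally I would pass from the periodic line back to $(0,1)$ and assemble. The $L^2$-orthogonality of the modes $\pm\theta$ gives $\|F_j\|_{L^2}^2=\tfrac12\|G_j\|_{L^2}^2$, and likewise for the seminorm, while the odd symmetry of $F_j$ about $0$ and $1$ proved in Lemma \ref{lem:odd} lets me replace integrals over $(-2,2)$ by integrals over $(0,1)$, exactly as in the proof of Theorem \ref{thm:eigenfunctions}. Taking the square root of the mass computation yields $(D_{reg})_{jj}$, and forming the Rayleigh quotient $|F_j|_{H^1}^2/\|F_j\|_{L^2}^2$ yields $(\Lambda_{reg})_{jj}$, with the factor $\Phi_{2p+1}(\theta)$ cancelling appropriately between numerator and denominator. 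I expect the genuine obstacle to be bookkeeping rather than ideas: I must track the grid scaling $h$ through each convolution (each derivative contributes a factor $h^{-1}$, each integration a factor $h$, and the periodic grid contributes its number of cells), carry the constants coming from $F_j=\operatorname{Im}G_j$ and from the half-period reduction, and verify that the even-$p$ node shift and the additional mode $\mathbf f^{(n)}$ leave the symbols unchanged, so that all multiplicative constants and powers of $h$ reduce to those recorded in the statement.
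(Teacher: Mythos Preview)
Your plan is correct and follows essentially the same route as the paper: pass to the periodic setting, express $\|F_j\|_{L^2}^2$ and $|F_j|_{H^1}^2$ through the cardinal B-spline basis, and read off the three symbols $\Phi_p$, $\Phi_{2p+1}$, $(2-2\cos\theta)\Phi_{2p-1}$ for the collocation, mass, and stiffness operators. The only cosmetic difference is that the paper phrases this in matrix language---circulant $C_{per}$, $M_{per}$, $K_{per}$ simultaneously diagonalized by the discrete sine transform $Q_{per}$, with the mass and stiffness symbols quoted from \cite{Takacs2016}---whereas you work directly with the complex exponential interpolant $G_j$ and derive the symbols yourself via the autocorrelation identity $\int\widetilde{\mathcal B}_p(\cdot-k)\widetilde{\mathcal B}_p=\widetilde{\mathcal B}_{2p+1}(k)$ and the derivative rule; these are two presentations of the same computation.
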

A proof of this Lemma is given in the Appendix.

\paragraph{Other boundary conditions.}
Up to this point, we only considered the case $\Bdy= \{ 0, 1\}$. However, similar results hold when $\Bdy \neq \{ 0,1 \}$. More precisely, it can be shown that the discrete eigenfunctions of the Laplacian on $S_{p,h,\Bdy}^{reg}$ interpolate the first $n_{reg}$ analytical ones, given by
\[
        \phi_j (x) =\sin (\alpha_j x+\beta),
\]
where the values of $\alpha_j$ and $\beta$ are as given in Table~\ref{tab:all_cases}.
\begin{figure}[h]
	\centering
	\includegraphics[width=.9\textwidth]{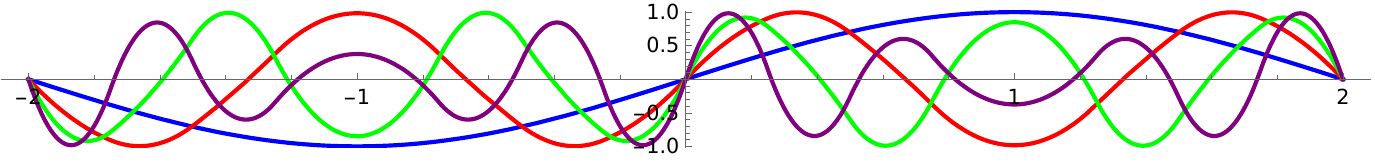}
	\includegraphics[width=.9\textwidth]{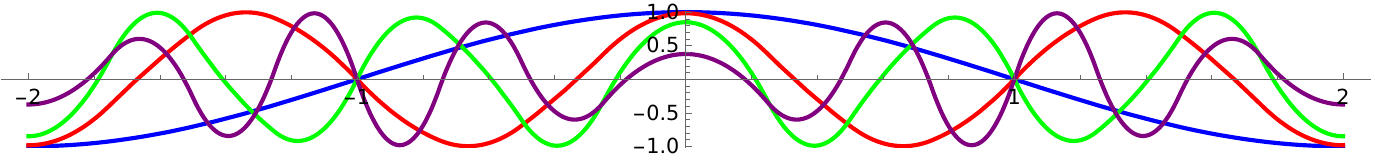}
	\includegraphics[width=.9\textwidth]{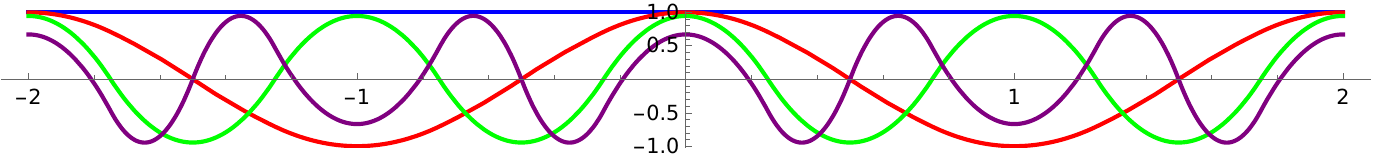}
	\caption{\label{fig:pic4a} Fourier modes with even symmetry on $\Bdy$ and odd symmetry on $\{0,1\}\setminus \Bdy$ and degree $2$ for the choices $\Bdy=\{0\}$ (top), $\Bdy=\{1\}$ (middle) and $\Bdy=\emptyset$ (bottom).}
\end{figure}
The expression of the discrete eigenfunctions can be found as follows.
Once again, the eigenvectors of the mass and stiffness matrices for $\Sper_{p,h}$ with respect to the Lagrangian basis are considered. By taking linear combinations of vectors \eqref{eq:eigenvectors}, for each $j=1,\ldots,n_{reg}$ one can construct an eigenvector $\mathbf{f}^{(j)}$ by imposing $ \mathbf{f}^{(j)}_i =\phi_j(x_i)$ for $i=-2n+1, \ldots, 2n$. 
It can then be shown that the modes corresponding to these eigenvectors are odd symmetric with respect to $x$ 
if $x \in \Bdy$ and even symmetric with respect to $x$ 
if $x \in \{ 0,1 \} \setminus \Bdy$, cf. Figure~\ref{fig:pic4a}.
One can use this fact to prove that the orthogonality of these discrete eigenfunctions in the $L^2$ and $H^1$ inner products over $[-2,2]$  still holds if these functions are restricted to $[0,1]$. 
As a final step, it can be shown that these restrictions form a basis for $S_{p,h,\Bdy}^{reg}$, which implies that they are the discrete eigenfunctions of the mass and stiffness operators on this space.

{\renewcommand\arraystretch{1.5} 
\begin{table}[htp]
\begin{center}
\fontsize{9}{11}\selectfont
\begin{tabular}{|c|c|c|c|c|}
\hline
\multicolumn{5}{|c|}{  degree $p$ is odd } \\
\hline
& $\Bdy=\{ 0, 1 \}$ & $\Bdy=\{ 0 \}$ & $\Bdy=\{ 1 \}$ & $\Bdy=\emptyset$ \\
\hline
$n_{reg}$ &
$n-1$ &
$n$ &
$n$ &
$n+1$ \\[1mm]
\hline
$x_i$ &
$ih$ &
$ih$ &
$(i-1)h$ &
$(i-1)h$ \\[1mm]
\hline
$\alpha_j$ &
$j\pi$ &
$(j-1/2)\pi$ &
$(j-1/2)\pi$ &
$(j-1)\pi$ \\[1mm]
\hline
$\beta$ &
$0$ &
$0$ &
$\pi/2$ &
$\pi/2$ \\[1mm]
\hline
$\left( U_{reg} \right)_{ij}$ 
& 
$\sin \left( \tfrac{ij \pi}{n} \right)$ &
$\sin \left( \tfrac{i (2j-1) \pi}{2n} \right)$ & 
$\cos \left( \tfrac{(i-1)(2j-1) \pi}{2n} \right)$ &
$\cos \left( \tfrac{(i-1)(j-1) \pi}{n} \right)$ \\[1mm]
\hline
{\renewcommand\arraystretch{1.2}  \begin{tabular}{c} Trigonometric \\ Transform \end{tabular} }& DST-I & DST-II & DCT-II & DCT-I \\
\hline
{\renewcommand\arraystretch{1.2}  \begin{tabular}{c} Adjoint \\ Transform \end{tabular} }& DST-I & DST-III & DCT-III & DCT-I \\
\hline
\hline
\multicolumn{5}{|c|}{   degree $p$ is even } \\
\hline
& $\Bdy=\{ 0, 1 \}$ & $\Bdy=\{ 0 \}$ & $\Bdy=\{ 1 \}$ & $\Bdy=\emptyset$ \\
\hline
$n_{reg}$ &
$n$ &
$n$ &
$n$ &
$n$ \\[1mm]
\hline
$x_i$ &
$(i-1/2)h$ &
$(i-1/2)h$ &
$(i-1/2)h$ &
$(i-1/2)h$ \\[1mm]
\hline
$\alpha_j$ &
$j\pi$ &
$(j-1/2)\pi$ &
$(j-1/2)\pi$ &
$(j-1)\pi$ \\[1mm]
\hline
$\beta$ &
$0$ &
$0$ &
$\pi/2$ &
$\pi/2$ \\[1mm]
\hline
$\left( U_{reg} \right)_{ij}$ & 
$\sin \left( \tfrac{\left(2i-1\right)j \pi}{2n} \right)$ &
$\sin \left( \tfrac{(2i-1) (2j-1) \pi}{4n} \right)$ &
$\cos \left( \tfrac{(2i-1)(2j-1) \pi}{4n} \right)$ &
$\cos \left( \tfrac{(2i-1)(j-1) \pi}{2n} \right)$  \\[1mm]
\hline
{\renewcommand\arraystretch{1.2}  \begin{tabular}{c} Trigonometric \\ Transform \end{tabular} }& DST-III & DST-IV & DCT-IV & DCT-III \\
\hline
{\renewcommand\arraystretch{1.2}  \begin{tabular}{c} Adjoint \\ Transform \end{tabular} }& DST-II & DST-IV & DCT-IV & DCT-II \\
\hline
\end{tabular}

\caption{Entries of matrix $U_{reg}=\big[ \sin( x_i \alpha_j +\beta) \big]_{i,j=1}^{n_{reg}}$ and trigonometric transforms associated to $U_{reg}$ and $U_{reg}^\top$, depending on parity of $p$ and Dirichlet boundary $\Bdy$.
}
\label{tab:all_cases}
\end{center}
\end{table}}

In conclusion, 
the eigenvector matrix $Q_{reg}$ has an expression analogous to \eqref{eq:reg_eigenvectors},
where matrix $U_{reg}$ contains the analytical eigenfunctions evaluated at the nodes $x_i$ that belong to $[0,1]\setminus \Bdy$; the number of those nodes is $n_{reg}$. Matrix-vector products with $U_{reg}$ can be computed using a particular variant of the Discrete Fourier Transform. In total, there are eight possible cases: 
two (one for even $p$ and one for odd $p$) for each of the four possible boundary configurations encoded by the set $\Bdy$. These eight cases correspond to as many trigonometric transforms, namely the Discrete Sine and Cosine Transforms (abbreviated with DST and DCT, respectively) of types I-IV, see e.g. \cite{Strang1999,Britanak2007} for more details.
We collect all cases in Table \ref{tab:all_cases}, where for each case we report the entries of the matrix $U_{reg}$, as well as the corresponding trigonometric transform. For completeness, in each case we also report the name of the ``adjoint'' transform, i.e. the transform associated with $U_{reg}^\top$, which is also required for the application of the preconditioner \eqref{eq:preconditioner}.

\section{Expression of the eigenvectors for a special basis}
\label{sec:special_basis}

Here we specialize the results of Section~\ref{sec:discrete_eigenfunctions} to a specific choice of the basis for $S_{p,h,\Bdy}^{reg}$, and show that in this case there is no need for an interpolation matrix as in~\eqref{eq:reg_eigenvectors}, i.e. each eigenvector is, up to scaling, the evaluation of a sinusoidal function on equally spaced points. This is in agreement with the results of \cite{Deng2023,Lamsahel2025}. 

For the sake of completeness, here we report the details on the constructions of the basis, assuming 
for simplicity $n > p$ (see e.g. \cite{Lamsahel2025}  for the general case).
The cardinal B-splines (see e.g. \cite[Chapter XVI]{Boor2001}) of degree $p$ that are active on $(0,1)$ are given by
\begin{equation} \label{eq:Btilde}
    \widetilde B_i(x) := \widetilde{\mathcal B}_p( x/h - x_i), \qquad i=-
    \left\lfloor \frac{p-1}{2} \right\rfloor, 
    -\left\lfloor \frac{p-1}{2} \right\rfloor+1, \ldots, n + \left\lfloor \frac{p}{2} \right\rfloor,
\end{equation}
with $\widetilde{\mathcal{B}}_p$ as in~\eqref{eq:cardinal} and $x_i$ as in~\eqref{eq:nodes}.
This somewhat unusual numbering guarantees that each $\widetilde{B}_i$ takes its maximum at $x_i$.

We consider the ordered set
\[ \mathcal{B}_L = \begin{cases}
\left\{ \widetilde{B}_i - \widetilde{B}_{\delta_p - i} \right\}_{i=1}^{\lfloor p/2 \rfloor} &\mbox{if } 0 \in \Bdy, \\[2mm]
\left\{\widetilde{B}_i + \widetilde{B}_{1 - i} \right\}_{i=1}^{ p/2 } &\mbox{if } 0 \notin \Bdy, p \mbox{ even}, \\[2mm]
\left\{\widetilde{B}_0 \right\} \cup \left\{\widetilde{B}_i + \widetilde{B}_{-i} \right\}_{i=1}^{\lfloor (p-1)/2 \rfloor} &\mbox{if } 0 \notin \Bdy, p \mbox{ odd},	\end{cases} \]
where here and in the rest of this section the symbol ``$\cup$'' denotes the ordered union of ordered sets, and $\delta_p$ is defined in \eqref{eq:delta}.
Analogously, we consider
\[ \mathcal{B}_R = \begin{cases}
\left\{ \widetilde{B}_i - \widetilde{B}_{2n + \delta_p - i} \right\}_{i = n - \lfloor (p-1)/2 \rfloor}^{n-1+\delta_p} &\mbox{if } 1 \in \Bdy, \\[2mm]
\left\{\widetilde{B}_i + \widetilde{B}_{2n + 1 - i} \right\}_{i= n - p/2 - 1}^{ n } &\mbox{if } 1 \notin \Bdy, p \mbox{ even}, \\[2mm]
\left\{\widetilde{B}_i + \widetilde{B}_{2n - i} \right\}_{i=n - (p-1)/2}^{n-1} \cup \left\{ \widetilde{B}_n  \right\} &\mbox{if } 1 \notin \Bdy, p \mbox{ odd},	\end{cases} \]
and
\[  \mathcal{B}_I = \left\{ \widetilde{B}_i \right\}_{i=\lfloor p/2 \rfloor + 1}^{n - \lfloor (p+1)/2 \rfloor}.  \]
Finally, we define
\begin{equation} \label{eq:special_basis}
\mathcal{B} := \left\{ \widehat{B}_i \right\}_{i=1}^{n_{reg}} := \mathcal{B}_L \cup \mathcal{B}_I \cup \mathcal{B}_R.
\end{equation}
This set of functions, restricted to $[0,1]$, is a basis for $S^{reg}_{p,h,\Bdy}$, see~\cite{Takacs2016} for the case $\Bdy=\emptyset$. Moreover, the following result holds.

\begin{theorem} \label{thm:special_basis}
Let $C_{reg} \in \mathbb{R}^{n_{reg} \times n_{reg}}$ with $\left( C_{reg} \right)_{ij} = \widehat{B}_j(x_i) $, for $i,j = 1,\ldots,n_{reg}$, then the columns of $U_{reg}$, as defined in Table \ref{tab:all_cases}, are eigenvectors for $C_{reg}$.
Precisely, it holds
\[ C_{reg} U_{reg} = U_{reg} \Theta  \]
where $\Theta$ is a diagonal matrix with $ \displaystyle \left(\Theta\right)_{jj} = \sum_{k=-\lfloor p/2 \rfloor}^{\lfloor p/2 \rfloor} \widetilde{\mathcal B}_p(k) \cos \left( k h \alpha_j \right)$, for $j=1,\ldots,n_{reg}$. 
\end{theorem}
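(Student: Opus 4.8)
The plan is to prove the matrix identity column by column. Since $\Theta$ is diagonal, the claimed relation $C_{reg} U_{reg} = U_{reg}\Theta$ is equivalent to showing that each column $\mathbf u_j = [\sin(x_i\alpha_j+\beta)]_{i=1}^{n_{reg}}$ of $U_{reg}$ is an eigenvector of $C_{reg}$ with eigenvalue $\Theta_{jj}$. Writing out the $(i,j)$ entry of both sides and recalling $(C_{reg})_{i\ell}=\widehat B_\ell(x_i)$, this amounts to the scalar identity
$$ \sum_{\ell=1}^{n_{reg}} \widehat B_\ell(x_i)\,\sin(x_\ell\alpha_j+\beta) = \Theta_{jj}\,\sin(x_i\alpha_j+\beta), \qquad i,j=1,\dots,n_{reg}, $$
which I will establish for every boundary configuration encoded in Table~\ref{tab:all_cases}.

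The first ingredient is the evaluation of a cardinal B-spline at the nodes: because consecutive nodes are spaced by $h$, so that $x_i-x_\ell=(i-\ell)h$ in both the odd and the even case, the definition~\eqref{eq:Btilde} gives $\widetilde B_\ell(x_i)=\widetilde{\mathcal B}_p(i-\ell)$, which vanishes unless $|i-\ell|\le\lfloor p/2\rfloor$. The second ingredient is a purely trigonometric computation: setting $c_\ell:=\sin(x_\ell\alpha_j+\beta)$ for all $\ell\in\mathbb Z$ and using $x_{i-k}=x_i-kh$ together with the angle-subtraction formula, I obtain
$$ \sum_{k=-\lfloor p/2\rfloor}^{\lfloor p/2\rfloor}\widetilde{\mathcal B}_p(k)\,c_{i-k} = \sin(x_i\alpha_j+\beta)\!\!\sum_{k=-\lfloor p/2\rfloor}^{\lfloor p/2\rfloor}\!\!\widetilde{\mathcal B}_p(k)\cos(kh\alpha_j) - \cos(x_i\alpha_j+\beta)\!\!\sum_{k=-\lfloor p/2\rfloor}^{\lfloor p/2\rfloor}\!\!\widetilde{\mathcal B}_p(k)\sin(kh\alpha_j). $$
Since $\widetilde{\mathcal B}_p$ is symmetric while $k\mapsto\sin(kh\alpha_j)$ is odd, the last sum vanishes, and the remaining coefficient is exactly $\Theta_{jj}$. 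Hence the bi-infinite sum $\sum_{\ell\in\mathbb Z}\widetilde B_\ell(x_i)\,c_\ell$ already equals the desired right-hand side; it remains to show that the finite, boundary-adapted sum on the left reproduces this bi-infinite sum.

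This last step is the heart of the argument, and I expect it to be the main (essentially bookkeeping) obstacle. The key observation is that the symmetrizations defining $\mathcal B_L$ and $\mathcal B_R$ are chosen precisely to match the reflection symmetry of the samples $c_\ell$: from~\eqref{eq:nodes_symmetry} and the values of $\alpha_j,\beta$ in Table~\ref{tab:all_cases}, $c_\ell$ is odd about each node of $\Bdy$ and even about each node of $\{0,1\}\setminus\Bdy$. Consequently, for a boundary function such as $\widehat B_\ell=\widetilde B_\ell-\widetilde B_{\delta_p-\ell}$ (the Dirichlet case at $0$, for which $c_{\delta_p-\ell}=-c_\ell$) one has
$$ \widehat B_\ell(x_i)\,c_\ell = \widetilde B_\ell(x_i)\,c_\ell + \widetilde B_{\delta_p-\ell}(x_i)\,c_{\delta_p-\ell}, $$
so that the single finite term restores both the term of index $\ell$ and the reflected term of index $\delta_p-\ell$ in the bi-infinite sum; the analogous identity with a plus sign holds for the even (non-Dirichlet) symmetrizations. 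Summing over $\ell=1,\dots,n_{reg}$, the interior functions $\mathcal B_I$ supply the central indices while the reflected contributions from $\mathcal B_L$ and $\mathcal B_R$ supply exactly the indices lying outside $\{1,\dots,n_{reg}\}$; the assumption $n>p$ guarantees that the left and right reflections act on disjoint index ranges and never interact. I would then verify, case by case (odd/even $p$, and the four choices of $\Bdy$), that the union of original and reflected indices covers $\{\,i-k : |k|\le\lfloor p/2\rfloor\,\}$ without gaps or overlaps, which reduces the left-hand side to $\sum_{k}\widetilde{\mathcal B}_p(k)\,c_{i-k}$ and closes the proof. As a final remark, combining this identity with~\eqref{eq:reg_eigenvectors} yields $Q_{reg}=U_{reg}(\Theta D_{reg})^{-1}$, so that the interpolation matrix $C_{reg}^{-1}$ is never needed, which is the point of this section.
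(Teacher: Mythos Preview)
Your proposal is correct and follows essentially the same approach as the paper: both unfold the finite sum $\sum_\ell \widehat B_\ell(x_i)\sin(x_\ell\alpha_j+\beta)$ into the convolution $\sum_k \widetilde{\mathcal B}_p(k)\sin(x_{i-k}\alpha_j+\beta)$ by matching the reflection parity of the samples to the symmetrizations defining $\mathcal B_L,\mathcal B_R$, and then evaluate the convolution via the angle-addition formula and the evenness of $\widetilde{\mathcal B}_p$ (the paper writes out only $\Bdy=\{0,1\}$ while you keep the general $\beta$). One small caveat in your bookkeeping step: the union of original and reflected indices does not always cover $\{i-k:|k|\le\lfloor p/2\rfloor\}$ literally ``without gaps''---for odd $p$ at a Dirichlet endpoint the fixed point of the reflection (the index with $x_\ell=0$ or $x_\ell=1$) is omitted---but the corresponding sample vanishes, so the identity is unaffected; the paper makes exactly this adjustment when deriving~\eqref{eq:identity1}.
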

\begin{proof}

We give details of the proof only for the case $\Bdy = \{0,1\}$, as in the other cases the proof can be obtained analogously. When $\Bdy = \{0,1\}$, the basis functions have the form
\begin{equation} \label{eq:special_basis_dirichlet} \widehat{B}_i =	\begin{cases} \widetilde{B}_i - \widetilde{B}_{\delta_p - i} &\mbox{for } i=1,\ldots,\lfloor p/2 \rfloor,\\ \widetilde{B}_i & \mbox{for } i=\lfloor p/2 \rfloor + 1, \ldots, n_{reg} - \lfloor p/2 \rfloor, \\ \widetilde{B}_i-\widetilde{B}_{n + n_{reg} + 1 - i} & \mbox{for } i=  n_{reg} - \lfloor p/2 \rfloor + 1,\ldots, n_{reg}. \end{cases} \end{equation}
We preliminary observe that, since $x_{\delta_p - k} = - x_{k}$ for $k=1,\ldots,\lfloor p/2 \rfloor$, 
it holds
\begin{equation} \label{eq:identity1}
-\sum_{k=1}^{\lfloor p/2 \rfloor} \widetilde{B}_{\delta_p - k}(x) \sin(x_i\alpha_j) = \sum_{k=1}^{\lfloor p/2 \rfloor} \widetilde{B}_{\delta_p - k}(x) \sin(x_{\delta_p - k}\alpha_j) = \sum_{k=-\lfloor (p-1)/2 \rfloor}^{0} \widetilde{B}_{k}(x) \sin(x_{k}\alpha_j)
\end{equation}
for $x \in [-ph, \, 1+ph]$ and for $j=1,\ldots,n_{reg}$. Note that, for odd $p$, in the last sum we added the null term corresponding to $j=0$. Similarly, it holds
\begin{equation} \label{eq:identity2}
-\sum_{k=n_{reg} - \lfloor p/2 \rfloor + 1}^{n_{reg}} \widetilde{B}_{n + n_{reg} + 1 - i}(x) \sin(x_k\alpha_j) = \sum_{k=n_{reg} + 1}^{n + \lfloor p/2 \rfloor} \widetilde{B}_{k}(x) \sin(x_k\alpha_j).
\end{equation}
Given $j \in \{ 1,\ldots,n_{reg} \}$, we denote with 
\[ \mathbf{u}^{(j)} = \left[ \sin\left( x_i \alpha_j \right) \right]_{i=1}^{n_{reg}} \] 
the $j-$th column of $U_{reg}$. For $i = 1,\ldots,n_{reg},$ it holds
\begin{align} \label{eq:step1}
\left( C_{reg} \mathbf{u}^{(j)}  \right)_i & = \sum_{k=1}^{n_{reg}} \widehat{B}_k(x_i) \sin \left( x_k\alpha_j \right) = \sum_{k=-\lfloor (p-1)/2 \rfloor}^{n + \lfloor p/2 \rfloor} \widetilde{B}_k(x_i) \sin \left( x_k\alpha_j \right) \\
\label{eq:step2} & = \sum_{k=i-\lfloor p/2 \rfloor}^{i + \lfloor p/2 \rfloor} \widetilde{B}_k(x_i) \sin \left( x_k\alpha_j \right) = \sum_{k=-\lfloor p/2 \rfloor}^{\lfloor p/2 \rfloor} \widetilde{\mathcal B}_p(k) \sin \left(  \left( x_i + kh \right) \alpha_j \right) \\
\label{eq:step3} & = \sum_{k=-\lfloor p/2 \rfloor}^{\lfloor p/2 \rfloor} \widetilde{\mathcal B}_p(k) \left( \cos \left( k h \alpha_j \right) \sin \left( x_i \alpha_j \right) + \cos \left(  x_i \alpha_j \right) \sin \left(  k h \alpha_j  \right) \right),
\end{align}
where the last equality of \eqref{eq:step1} follows from the definition of the basis functions \eqref{eq:special_basis_dirichlet} and from  \eqref{eq:identity1} and \eqref{eq:identity2}, while the first equality of \eqref{eq:step2} is obtained simply by neglecting the indices $k$ such that $\widetilde{B}_{k}(x_i) = 0$ in the summation. 
The second equality \eqref{eq:step2} follows from \eqref{eq:Btilde}, while
\eqref{eq:step3} follows from the sum formula for the sine function.
We now observe that
\[ 
\sum_{k=-\lfloor p/2 \rfloor}^{\lfloor p/2 \rfloor} \widetilde{\mathcal B}_p(k)  \cos \left(  x_i \alpha_j \right) \sin \left(  k h  \alpha_j \right) = 0
\]
since the addends are odd with respect to the summation index $k$.
In conclusion, we have
\[ \left( C_{reg} \mathbf{u}^{(j)}  \right)_i = \sum_{k=-\lfloor p/2 \rfloor}^{\lfloor p/2 \rfloor} \widetilde{\mathcal B}_p(k) \cos \left( k h \alpha_j \right) \sin \left( x_i \alpha_j \right) = 
\left( \Theta \right)_{jj} \left( \mathbf{u}^{(j)} \right)_i.
\]
\end{proof}

Thanks to this theorem, we can specialize the structure of the eigenvector matrix \eqref{eq:reg_eigenvectors} when the basis \eqref{eq:special_basis} is considered. Indeed, in this case we have
\[ 
Q_{reg} = U_{reg} \widetilde{D}_{reg}^{-1},
\]
where $\widetilde{D}_{reg} =  D_{reg} \Theta$ is a diagonal matrix. Equivalently, if the mass and stiffness matrices of $S^{reg}_{p,h,\Bdy}$ are written with respect to the basis \eqref{eq:special_basis}, then they are diagonalized by $U_{reg}$.
We emphasize that a single matrix-vector product with $Q_{reg}$ now requires only $\mathcal O \left( m \log m \right) $ flops. 
We also emphasize that $\widetilde{D}_{reg}$ can be computed either by combining the expressions of $D_{reg}$ and $\Theta$ given by Lemma \ref{lem:coefs} and Theorem \ref{thm:special_basis}, or directly as $ \widetilde{D}_{reg} = \left( U_{reg}^\top V_{reg}^\top M V_{reg} U_{reg} \right)^{1/2}.$

\section{Numerical results}
\label{sec:num_res}
\newcommand{\z}{\phantom{0}}

In this Section we numerically assess the performance of the proposed preconditioning strategy. 
All experiments are performed using Matlab 2024b with the GeoPDEs 3.0 toolbox \cite{Vazquez2016}. 
In each test, we discretize the Poisson problem using the same number of elements $n$ and the same spline degree $p$ in every parametric direction. We consider homogeneous Dirichlet boundary conditions everywhere (meaning that $\Bdy =\{ 0,1\}$ for the univariate spaces) and globally $C^{p-1}$ splines, unless otherwise stated. As right-hand sides, we consider random vectors.
The resulting linear systems are solved using the Conjugate Gradient method, preconditioned with the IFFD method, with the zero vector as initial guess and tolerance on the relative residual set equal to $10^{-8}$.  
The eigendecomposition \eqref{eq:eigendecomposition_out} on $S_{p,h,\Bdy}^{out}$ is computed numerically using the Matlab function {\tt eig}.

\paragraph{Square domain.}
In the first experiment, we consider the unit square $\Omega = [0,1]^2$. Since there is no geometry, the system matrix has the structure \eqref{eq:system_matrix}. Moreover, here $K_1 = K_2 $ and $M_1 = M_2 $, therefore $Q_1=Q_2 =:Q$. The purpose of this test is to assess the quality of the approximation $\widetilde{Q} \approx Q$. Indeed, if this approximation were exact, the method would converge in 1 iteration. 
The number of PCG iterations is reported in Table \ref{tab:square}. These results show that the IFFD preconditioner is clearly robust in both $h$ and $p$, as predicted by Theorem \ref{thm:stable_splitting}, and the number of iterations is very low.
Note that for $p=2$ the IFFD preconditioner is actually a direct solver, as predicted by the theory. Indeed, as observed in Section \ref{sec:splitting}, in this case we have for all directions $S_{p,h,\Bdy} = S_{p,h,\Bdy}^{reg}$ and therefore the whole mass and stiffness matrices are diagonalized by $\widetilde{Q}$. 

 {\renewcommand\arraystretch{1.4} 
\begin{table}[ht]
\begin{center}
\begin{tabular}{|c|c|c|c|c|c|c|}
\hline
$n$ & $p=2$ &$p=3$ & $p=4$ & $p=5$ & $p=6$ & $p=7$  \\
 \hline
 128 & 1 &  7  &  6  &  6  &  6  &  6 \\
\hline
256 & 1 &  7  &  6  &  6  &  6  &  6 \\
\hline
512 & 1 &  7  &  6  &  6  &  6  &  6 \\
\hline
\end{tabular}
\caption{Square domain. Number of PCG iterations.}
\label{tab:square}
\end{center}
\end{table}}

\paragraph{Cube domain with mixed boundary conditions.}
We now assess the quality of the approximation of the eigenvectors when different boundary conditions are considered, that is $\Bdy \neq \{ 0, 1\}$ in the univariate spaces. For this purpose, we again consider a domain with trivial geometry, i.e. the unit cube $\Omega = [0,1]^3$. We consider Dirichlet conditions on the boundaries corresponding to $x=0$ and $y=1$, and Neumann conditions on the remaining boundaries. 
We emphasize that the chosen problem yields in the univariate spaces all the combinations of boundary conditions other than $\Bdy = \{ 0, 1\}$. 
Table \eqref{tab:cube} collects the number of PCG iterations obtained for this problem, which is analogous to the previous case. The only exception is the case $p=2$, since here the univariate outlier spaces are not trivial, as discussed in Section \ref{sec:splitting}, and therefore the diagonalization is not exact. 

{\renewcommand\arraystretch{1.4} 
\begin{table}[ht]
\begin{center} 
\begin{tabular}{|c|c|c|c|c|}
\hline
$ n $ & $p = 2$ & $p = 3$ & $p=4$ & $p=5$ \\
\hline
\z64 &  7  &  7  &  7  &  6  \\
\hline
128 &  7  &  7  &  6  &  6  \\
\hline
256 &  7  &  6  &  6  &  6  \\
\hline
\end{tabular} 
\caption{Cube domain. Number of PCG iterations.}
\label{tab:cube}
\end{center} \end{table}
}

\paragraph{Thick quarter of annulus domain.}
We now consider a three-dimensional problem with non-trivial geometry, i.e. the thick quarter of annulus shown in Figure \ref{fig:thick_quater_of_annulus}.
We consider Dirichlet boundary conditions on the bottom face, and Neumann boundary conditions on the other faces. We compare the performance of the IFFD preconditioner \eqref{eq:preconditioner}
with the standard FD preconditioner \eqref{eq:exact_diagonalization}, obtained by numerically computing the exact eigendecompositions \eqref{eq:eigendecomposition}. The results are reported in Table \ref{tab:thick_quarter_of_annulus}, and we see that the number of PCG iterations is almost identical for the two approaches. We recall that, on a three-dimensional problem, each application of \eqref{eq:exact_diagonalization} requires $\mathcal O(n^4)= \mathcal O \left( N_{dof}^{4/3} \right)$ flops, where $N_{dof}$ is the number of degrees of freedom, while \eqref{eq:preconditioner} only requires $ \mathcal O \left( N_{dof} \left( \log N_{dof} + p \right) \right) $ flops.

\begin{figure}[ht]
	\centering
	\includegraphics[width=.5\textwidth]{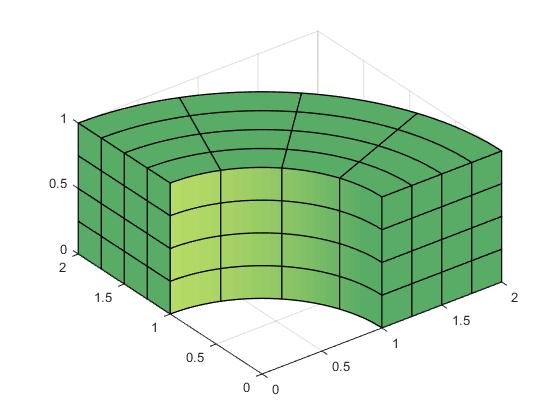}
	\caption{\label{fig:thick_quater_of_annulus} Thick quarter of annulus domain.}
\end{figure}

{\renewcommand\arraystretch{1.4} 
\begin{table}[ht]
\begin{center} 
\begin{tabular}{|c|c|c|c|c|}
\hline
$ n $ & $p = 2$ & $p = 3$ & $p=4$ & $p=5$ \\
\hline
16 & 28 / 29 & 28 / 29 & 28 / 29 & 29 / 30 \\
\hline
32 & 28 / 30 & 28 / 29 & 29 / 29 & 29 / 30 \\
\hline
64 & 28 / 30 & 28 / 30 & 29 / 30 & 29 / 30 \\
\hline
\end{tabular} 
\caption{ Thick quarter of annulus domain. 
Number of PCG iterations obtained with the FD preconditioner (left numbers) and with the IFFD preconditioner (right numbers).
}
\label{tab:thick_quarter_of_annulus}
\end{center} \end{table}
}

\paragraph{Plate with hole domain parametrized with a $C^0$ line.}
In this test we address a problem where the spline space is not globally $C^{p-1}$. For this purpose, we consider the plate with hole domain shown in Figure \ref{fig:plate_with_hole}. The parametrization, which is taken from the GeoPDEs toolbox, is $C^0$ along the line connecting the upper-left corner with the bottom-left quarter of a circle. Thus, it is natural to consider a spline space having the same regularity. The IFFD preconditioner is then constructed following Remark \ref{rmk:reduced_regularity}. 
The number of PCG iterations, reported in Table \ref{tab:plate_with_hole}, indicates that our preconditioning strategy is robust with respect to $h$ and $p$ also in this case. For completeness, as in the previous case we also report the number of PCG iterations obtained with the standard FD method, which are again almost identical.

\begin{figure}[ht]
	\centering
	\includegraphics[width=.5\textwidth]{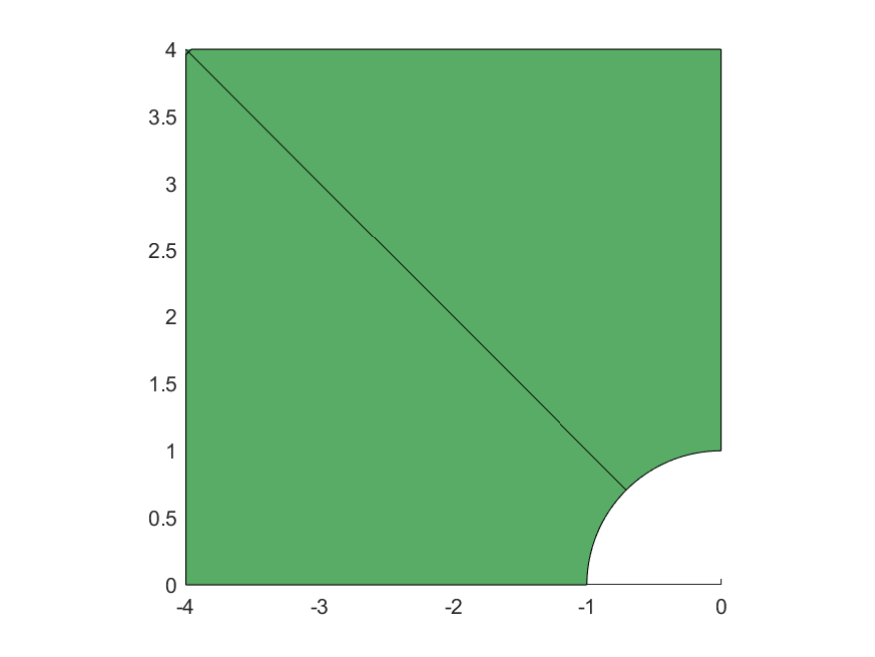}
	\caption{\label{fig:plate_with_hole} Plate with hole domain.}
\end{figure}

{\renewcommand\arraystretch{1.4} 
\begin{table}[ht]
\begin{center} 
\begin{tabular}{|c|c|c|c|c|}
\hline
$ n $ & $p = 2$ & $p = 3$ & $p=4$ & $p=5$ \\
\hline
\z64 & 27 / 27  & 27 / 29  & 28 / 28  & 29 / 29  \\
\hline
128 & 28 / 28  & 28 / 30  & 29 / 29 & 29 / 29 \\
\hline
256 & 28 / 29  & 28 / 30  & 29 / 30 & 29 / 30  \\
\hline
\end{tabular} 
\caption{Plate with hole domain. 
Number of PCG iterations obtained with the FD preconditioner (left numbers) and with the IFFD preconditioner (right numbers).
}
\label{tab:plate_with_hole}
\end{center} \end{table}
}

\section{Conclusions}
\label{sec:conclusions}

We proposed a preconditioner for the isogeometric discretization of the Poisson problem, based on an approximated version of the Fast Diagonalization method.
This approximation is obtained by splitting each univariate spline space in two parts, and then by diagonalizing the problem separately on each subspace. In particular, on the larger subspace, this diagonalization can be performed by leveraging the Fast Fourier Transform. We showed that the resulting preconditioner is spectrally equivalent to the standard Fast Diagonalization preconditioner (hence it is robust with respect to the mesh size $h$ and the degree $p$) and yields an application cost proportional to $N_{dof} \left( \log N_{dof} + p \right)$, where $N_{dof}$ is the number of degrees of freedom. This approach is referred to as Isogeometric Fast Fourier-based Diagonalization (IFFD) method.

The proposed method is applicable in cases where the overall computational domain is parameterised using a single geometry function (single-patch case). In more complicated problems, the overall domain might be represented as a collection of multiple, separately parameterized patches (multi-patch case). In order to solve linear systems obtained in the multi-patch setting, domain decomposition solvers, like overlapping Schwarz \cite{doi:10.1137/110833476}, Finite Element / IsogEometric Tearing and Interconnecting (FETI/IETI) \cite{KLEISS2012201}, or Balancing Domain Decomposition by Constraints (BDDC) \cite{doi:10.1142/S0218202513500048} methods have been proposed.

In order to realize these methods, linear systems that are local to one or a few patches have to be solved. This can be done using direct solvers for small subproblems, but alternatives are required when these subproblems are large. The approach proposed in this paper can be used, provided that the local problems exhibit an underlying tensor structure analogous to single-patch problems. This can be achieved by properly choosing (or adjusting) the domain decomposition method to be used, as it is done, for example, in \cite{BOSY20202604} for a FETI approach, or in \cite{doi:10.1142/S0218202523500495} for a Discontinuous Galerkin approach. In these works the Fast Diagonalization method is considered as an inexact solver for the local problems, and it could be replaced seamlessly with the IFFD method.

\section*{Appendix}

In order to show Theorem~\ref{thm:stable_splitting}, we use the following Lemma.
\begin{lemma}\label{lem:approxerror}
    There is a projector $\Pi_{p,h,\Bdy}: S_{p,h,\Bdy} \to S_{p,h,\Bdy}^{reg}$ such that
    \begin{align}\label{eq:lem:approxerror1}
    |\Pi_{p,h,\Bdy}v|_{H^1(0,1)}
    &\le
    |v|_{H^1(0,1)},
    \\
    \label{eq:lem:approxerror2}
    \| v - \Pi_{p,h,\Bdy} \, v \|_{L^2(0,1)}
    &\le \frac{h}{\pi} |v|_{H^1(0,1)}  
    \end{align}
    for all $v\in H^1_\Bdy(0,1)$.
\end{lemma}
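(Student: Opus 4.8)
The plan is to reduce the two estimates on the interval $(0,1)$ to the corresponding sharp estimates for the $L^2$-orthogonal projection onto the periodic spline space $\Sper_{p,h}$ on $(-2,2)$, exploiting the same reflection symmetry that already underlies the construction of the discrete Fourier modes in Section~\ref{sec:discrete_eigenfunctions}. First I would introduce an extension operator $E\colon H^1_\Bdy(0,1)\to H^1(-2,2)$ that reflects $v$ oddly across each endpoint lying in $\Bdy$ (Dirichlet) and evenly across each endpoint in $\{0,1\}\setminus\Bdy$ (Neumann), and then extends $4$-periodically. By construction $Ev$ lies in a definite $\pm1$ eigenspace of each of the two reflections $R_0\colon x\mapsto -x$ and $R_1\colon x\mapsto 2-x$, and since reflections preserve the modulus of the derivative one has the norm identities $\|Ev\|_{L^2(-2,2)}^2 = 4\|v\|_{L^2(0,1)}^2$ and $|Ev|_{H^1(-2,2)}^2 = 4|v|_{H^1(0,1)}^2$.

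Second, let $P^{per}$ denote the $L^2(-2,2)$-orthogonal projection onto $\Sper_{p,h}$ and set $\Pi_{p,h,\Bdy}v := (P^{per}Ev)\big|_{(0,1)}$. The structural facts to verify are that $R_0$ and $R_1$ map the breakpoint grid $\{ih\}$ to itself, hence map $\Sper_{p,h}$ to itself, so that $P^{per}$ commutes with both reflections; consequently $P^{per}Ev$ carries the same odd/even symmetry as $Ev$, which by the node symmetry~\eqref{eq:nodes_symmetry} and the derivative characterization~\eqref{eq:reg_space} of $S_{p,h,\Bdy}^{reg}$ means precisely that $P^{per}Ev = E(\Pi_{p,h,\Bdy}v)$ and $\Pi_{p,h,\Bdy}v\in S_{p,h,\Bdy}^{reg}$ (this is the restriction-of-symmetric-periodic-splines identification already used for the modes $F_j$ in Lemma~\ref{lem:odd} and Theorem~\ref{thm:eigenfunctions}). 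Since $Ev$ is itself a symmetric periodic spline whenever $v\in S_{p,h,\Bdy}^{reg}$, we get $\Pi_{p,h,\Bdy}|_{S^{reg}_{p,h,\Bdy}}=\mathrm{id}$, so $\Pi_{p,h,\Bdy}$ is indeed a projector onto $S_{p,h,\Bdy}^{reg}$. The two claimed bounds then drop out after cancelling the factor $4$: from $|\Pi_{p,h,\Bdy}v|_{H^1(0,1)}^2 = \tfrac14|P^{per}Ev|_{H^1(-2,2)}^2$ one obtains \eqref{eq:lem:approxerror1}, and from $\|v-\Pi_{p,h,\Bdy}v\|_{L^2(0,1)}^2 = \tfrac14\|Ev - P^{per}Ev\|_{L^2(-2,2)}^2$ one obtains \eqref{eq:lem:approxerror2}.

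Third, it remains to supply the two periodic estimates $|P^{per}u|_{H^1(-2,2)}\le |u|_{H^1(-2,2)}$ and $\|u-P^{per}u\|_{L^2(-2,2)}\le \tfrac{h}{\pi}|u|_{H^1(-2,2)}$ for all periodic $u\in H^1(-2,2)$. Since $\Sper_{p,h}$ is invariant under translation by $h$, the projection $P^{per}$ is diagonalized by the Fourier modes $e^{\mathbf{i}k\pi x/2}$, and both inequalities follow from an explicit computation of its symbol in terms of the autocorrelation (Euler–Frobenius function) of the cardinal B-spline; this is exactly the content of \cite[Theorem~4]{Hofreither2017}, where the constant $h/\pi$ is shown to be sharp and independent of $p$ and $h$. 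The main obstacle I anticipate is therefore not the periodic estimate, which may be invoked, but the bookkeeping in the second step: checking, uniformly over the four boundary configurations encoded by $\Bdy$, that the commutation $P^{per}R_i=R_iP^{per}$ together with the parity of $Ev$ forces $P^{per}Ev$ to vanish in exactly the even (resp. odd) derivatives at $0$ and $1$ prescribed in~\eqref{eq:reg_space}, so that the restriction genuinely lands in $S_{p,h,\Bdy}^{reg}$ and not merely in $S_{p,h,\Bdy}$.
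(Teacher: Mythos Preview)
Your extension--by--reflection, project--onto--$\Sper_{p,h}$, restrict--by--symmetry strategy is exactly the paper's: your operator $E$ is the paper's $E_\Bdy$ in~\eqref{eq:wdef}, and the claim that the projected symmetric periodic spline restricts into $S_{p,h,\Bdy}^{reg}$ is precisely what the paper verifies via~\eqref{eq:symmetry}. Both the paper's projector and your $P^{per}$ commute with the reflections $R_0,R_1$ (each is orthogonal for an inner product that the reflections preserve, and the reflections map $\Sper_{p,h}$ to itself), so the ``bookkeeping'' you flag as the main obstacle is handled identically in both approaches.

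The one substantive difference is the choice of periodic projector, and here there is a gap. The paper takes $\Pi_{p,h}^{per}$ to be the orthogonal projector for the $H^1_\emptyset$ scalar product, not the $L^2$-orthogonal projector $P^{per}$. With the $H^1$-type projector, \eqref{eq:lem:approxerror1} is automatic (an orthogonal projection contracts its own norm), and only the $L^2$ error bound needs input from the literature, which the paper takes from \cite[Theorem~4.1]{Espen2019}. With your $P^{per}$ the roles flip: the $L^2$ bound~\eqref{eq:lem:approxerror2} is immediate by best approximation, but $|P^{per}u|_{H^1(-2,2)}\le|u|_{H^1(-2,2)}$ with constant exactly $1$ is \emph{not} a consequence of shift-invariance alone---an $L^2$-orthogonal rank-one projection on a Fourier fiber need not contract a weighted ($H^1$) norm, so diagonalizing $P^{per}$ in Fourier does not by itself give the inequality. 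Your citation \cite[Theorem~4]{Hofreither2017} does not supply it either; in the present paper that reference is invoked for the stable-splitting statement (Theorem~\ref{thm:stable_splitting}), not for any $H^1$-contraction property of a periodic $L^2$ projector. Either switch to the $H^1_\emptyset$-orthogonal periodic projector---then everything you wrote goes through verbatim and coincides with the paper's proof---or provide a separate argument (or the correct reference) that $P^{per}$ is an $H^1$-seminorm contraction on $H^1_{per}(-2,2)$.
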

\begin{proof}
    Let
    \[
        (u,v)_{H^1_\Bdy(0,1)}
        :=
        \begin{cases}
            (u', v')_{L^2(0,1)} & \mbox{if}\;\, \Bdy\ne \emptyset,\\
            (u', v')_{L^2(0,1)} + (u,1)_{L^2(0,1)}(v,1)_{L^2(0,1)} & \mbox{if}\;\, \Bdy = \emptyset
        \end{cases}
    \]
    be a scalar product on $H^1_\Bdy(0,1)$. We choose $\Pi_{p,h,\Bdy}$ to be the orthogonal projector $H^1_\Bdy(0,1)\to S_{p,h,\Bdy}$ with respect to this scalar product.
    For $\Bdy\ne\emptyset$, this projector is the standard $H^1$ projector, so~\eqref{eq:lem:approxerror1} is obvious. For $\Bdy=\emptyset$, we use that $\Pi_{p,h,\Bdy}c = c$ for all $c\in\mathbb R$ to obtain $|\Pi_{p,h,\Bdy}v|_{H^1(0,1)}^2 + (v,1)_{L^2(0,1)}^2 = |\Pi_{p,h,\Bdy}v|_{H^1_\Bdy(0,1)}^2 \le |v|_{H^1_{\Bdy}(0,1)}^2 = |v|_{H^1(0,1)}^2 + (v,1)_{L^2(0,1)}^2$ from which~\eqref{eq:lem:approxerror1} immediately follows.
        
    The proof of~\eqref{eq:lem:approxerror2} is analogous to the proof of Theorem~1.1 in~\cite{Takacs2016}, where only the case with pure Neumann conditions ($\Bdy=\emptyset$) was considered. 
    First, define an extension operator $E_\Bdy: H^1_\Bdy(0,1) \to H^1_{per}(-2,2):= \{ v\in H^1(-2,2) : v(-2)=v(2)\}$, defined via
    \begin{equation}\label{eq:wdef}
            (E_\Bdy v)(x):=
            \begin{cases}
                    (-1)^{\Bdy_0} (-1)^{\Bdy_1} v(x-2) & \mbox{ if } x\in [-2,-1), \\
                    (-1)^{\Bdy_0} v(-x) & \mbox{ if } x\in [-1,0),\\
                    v(x) &  \mbox{ if } x\in [0,1), \\
                    (-1)^{\Bdy_1} v(2-x) & \mbox{ if } x\in [1,2], \\
            \end{cases}
    \end{equation}
    where $\Bdy_x$ is $1$ if $x\in \Bdy$ and $0$ otherwise.
    By construction, we have $|E_\Bdy v|_{H^1(-2,2)}=2|v|_{H^1(0,1)}$ for all $v\in H^1_\Bdy(0,1)$.
    Let $\Pi_{p,h}^{per}: H^1_{per}(-2,2) \to \Sper_{p,h} $ be the $H^1_\emptyset$-orthogonal projection, where $\Sper_{p,h}$ are the periodic B-splines over $(-2,2)$ as defined in~\eqref{eq:sper}.
    This projector is by definition orthogonal with respect to the scalar product $(u,v)_{H^1_\emptyset(-2,2)} := (u', v')_{L^2(-2,2)} + (u,1)_{L^2(-2,2)}(v,1)_{L^2(-2,2)}$.
    \cite[Theorem~4.1]{Espen2019} states
    \[
        \| w - \Pi_{p,h}^{per} w \|_{L^2(-2,2)} \le \frac{h}{\pi} |w|_{H^1(-2,2)}
        \quad\mbox{for}\quad w\in H^1_{per}(-2,2).
    \]
    Let $v\in H^1_\Bdy(0,1)$ be arbitrary but fixed and let $w_h:=\Pi_{p,h}^{per} E_\Bdy v$.
    Analogous to the proof of~\cite[Theorem~1.1]{Takacs2016}, the construction guarantees
    \begin{equation}\label{eq:symmetry}
        (-1)^{\Bdy_0}(-1)^{\Bdy_1} w_h(x-2)
        = (-1)^{\Bdy_0} w_h(-x) 
        = w_h(x)
        = (-1)^{\Bdy_1} w_h(2-x) \quad\mbox{for all}\quad x\in [0,1].
    \end{equation}
    Using this observation, we immediately obtain $v_h := w_h|_{[0,1]}\in S_{p,h,\Bdy}^{reg}$ and $v_h = \Pi_{p,h,\Bdy} v$. From~\eqref{eq:wdef} and~\eqref{eq:symmetry}, we also obtain $\| w - w_h \|_{L^2(-2,2)} = 2 \| v - v_h \|_{L^2(0,1)} $. Since also $|w|_{H^1(-2,2)}=2|v|_{H^1(0,1)}$, we obtain the desired result~\eqref{eq:lem:approxerror2}.
\end{proof}

Now, we can give a proof of Theorem~\ref{thm:stable_splitting}.

\begin{proof}[Proof of Theorem~\ref{thm:stable_splitting}]
Let $Q_{p,h,\Bdy}: S_{p,h,\Bdy} \to S_{p,h,\Bdy}^{reg}$ be the $L^2$-orthogonal projection, so for any given $v_h\in S_{p,h,\Bdy}$, we have $v_h^{reg} = Q_{p,h,\Bdy} v_h$ and $v_h^{out} = (I-Q_{p,h,\Bdy}) v_h$.
Analogously to the \cite[proof of Theorem~6.1]{Takacs2016}, we obtain the inverse estimate
\begin{equation}\label{eq:inverse}
    | v_h |_{H^1(0,1)}
        \le \frac{2\sqrt{3}}{h} \|v_h\|_{L^2(0,1)}  \quad \mbox{for all} \quad v_h \in S_{p,h,\Bdy}^{reg}.
\end{equation}
Using the triangle inequality, the stability of the projector, \eqref{eq:inverse}, the fact that the $L^2$-projector minimizes the error in the $L^2$-norm and Lemma~\ref{lem:approxerror}, the desired result is proven analogously to~\cite{Hofreither2017}:
\begin{align*}
    |v_h^{reg}|_{H^1(0,1)}
    & = |Q_{p,h,\Bdy} v_h|_{H^1(0,1)}
    \le |\Pi_{p,h,\Bdy} v_h|_{H^1(0,1)} + |\Pi_{p,h,\Bdy} v_h - Q_{p,h,\Bdy} v_h|_{H^1(0,1)} \\
    & \le |v_h|_{H^1(0,1)} + 2\sqrt{3} h^{-1} \|\Pi_{p,h,\Bdy} v_h - Q_{p,h,\Bdy} v_h\|_{L^2(0,1)}
    \\
    & \le |v_h|_{H^1(0,1)} + 2\sqrt{3} h^{-1} \|v_h - \Pi_{p,h,\Bdy} v_h\|_{L^2(0,1)} + 2\sqrt{3} h^{-1} \|v_h - Q_{p,h,\Bdy} v_h\|_{L^2(0,1)}
    \\
    & \le |v_h|_{H^1(0,1)} + 4\sqrt{3} h^{-1} \|v_h - \Pi_{p,h,\Bdy} v_h\|_{L^2(0,1)} 
    \le \left(1 + \frac{4\sqrt{3}}{\pi}\right) |v_h|_{H^1(0,1)}. 
\end{align*}
All other statements follow immediately using the triangle and Young's inequalities.
\end{proof}

Finally, we show Lemma~\ref{lem:coefs}.

\begin{proof}[Proof of Lemma~\ref{lem:coefs}]
    Because of periodicity, we have 
    \begin{equation}
    \|F_j\|_{L^2(0,1)}^2 = \tfrac14 \|E_{\Bdy} F_j\|_{L^2(-2,2)}^2
    \quad\mbox{and}\quad
    |F_j|_{H^1(0,1)}^2 = \tfrac14 \|E_{\Bdy} F_j\|_{H^1(-2,2)}^2,
    \end{equation}
    where $E_{\Bdy}$ is as in~\eqref{eq:wdef}. By construction, $E_{\Bdy} F_j \in S_{p,h}^{per}$ and $E_{\Bdy} F_j(x_i)=\sin(x_i\alpha_j)$. 
    The cardinal B-spline functions
    \[
        \widetilde B_i(x):=\sum_{z\in\mathbb Z} \widetilde{\mathcal B}_p\left(\frac{x}{h}-i+4nz\right),
        \qquad i=-2n+1,\dots,2n
    \]
    form a basis of $S_{p,h}^{per}$.
    Let $M_{per}, K_{per}\in \mathbb R^{4n\times4n}$ be the mass and stiffness matrices for the periodic cardinal splines, as given by
    \[
        (M_{per})_{ij} = \int_{-2}^2 \widetilde B_j(x) \widetilde B_i(x) \mathrm d x
        \quad\mbox{and}\quad
        (K_{per})_{ij} = \int_{-2}^2 \widetilde B_j'(x) \widetilde B_i'(x) \mathrm d x,
    \]
    $C_{per}\in \mathbb R^{4n\times4n}$ the corresponding collocation matrix, as given by $(C_{per})_{ij} = \widetilde B_i(x_j)$ with $x_j$ as in~\eqref{eq:nodes}, and $Q_{per}\in \mathbb R^{4n\times4n}$ the Discrete Sine Transformation, as given by $(Q_{per})_{ij}=\sin(x_i\alpha_j)$, in all cases for $i,j=1,\dots,4n$.
    Analogous to the derivation of~\eqref{eq:Dreg} and~\eqref{eq:Lambda reg}, we have
    $\|F_j\|_{L^2(0,1)}^2 = (Q_{per}^\top C_{per}^{-\top} M_{per} C_{per}^{-1}Q_{per})_{jj}$
    and $|F_j|_{H^1(0,1)}^2 
       = (Q_{per}^\top C_{per}^{-\top} K_{per} C_{per}^{-1}Q_{per})_{jj}$.
    Since $C_{per}$, $M_{per}$ and $K_{per}$ are a symmetric circulant matrices, they are
    diagonalized using the Discrete Sine Transform matrix $Q_{per}$, see~\cite[Section~4.8]{GolubVanLoan}. So, we have
    \begin{align}\label{eq:lem:coefs:proof1}
       \|F_j\|_{L^2(0,1)}^2 
       = \frac{(Q_{per}^\top M_{per} Q_{per})_{jj}}{(Q_{per}^{\top} C_{per}Q_{per})_{jj}^2},
       \qquad
       |F_j|_{H^1(0,1)}^2 
       = \frac{(Q_{per}^\top K_{per} Q_{per})_{jj}}{(Q_{per}^\top C_{per}Q_{per})_{jj}^2}.
    \end{align}
    In order to compute the entries of the diagonal matrix obtained by the Discrete Sine Transform, which also called the symbol, we use that these diagonal entries are the eigenvalues corresponding to the eigenvectors $\mathbf q^{(j)}:=(\sin (x_{-2n+1}\alpha_j),\dots,\sin(x_{2n}\alpha_j))^\top$.
    So, we obtain
    In order to compute the terms in the denominator, we use this observation and the summation theorem for the sine and the symmetry of $C_{per}$ to obtain
    \begin{equation}\label{eq:lem:coefs:proof2}
    \begin{aligned}
        (C_{per}\mathbf q^{(j)})_i
            & = \sum_{l=-2n+1}^{2n} \widetilde B_i(x_l) \sin (x_l\alpha_j) 
             = \sum_{j=-2n+1}^{2n} \widetilde B_0(x_{l-i}) \cos((l-i)h\alpha_j) \sin (x_i\alpha_j) \\
            & = \sum_{k=-\lfloor p/2 \rfloor}^{\lfloor p/2 \rfloor} \widetilde{\mathcal B}_P(k) 
            \cos(kh\alpha_j) (\mathbf q^{(j)})_i,
    \end{aligned}
    \end{equation}
    which shows $(Q_{per}^\top C_{per}Q_{per})_{jj} = \sum_{k=-\lfloor p/2\rfloor}^{\lfloor p/2\rfloor} \widetilde{\mathcal B}_{p}(k) \cos (kh\alpha_j)$.
    In \cite[Section~3.5]{Takacs2016}, the symbol of the mass matrix is derived. It can be expressed using the Eulerian numbers or, equivalently, using the cardinal B-splines of degree $2p+1$, cf. also~\cite{WANG2010486}. \cite{Takacs2016}[Eqs.~(3.12), (3.17)] state that the symbol of the stiffness matrix can be expressed by a term $\frac{2-2\cos(h\alpha_j)}{h^2}$ modeling the derivative and the symbol of the mass matrix for degree $p-1$. So, we obtain
    \begin{equation}\label{eq:lem:coefs:proof3}
    \begin{aligned}
        (Q_{per}^\top M_{per} Q_{per})_{jj}
        &= \sum_{k=-p}^p \widetilde{\mathcal B}_{2p+1}(k) \cos (kh\alpha_j),
        \\
        (Q_{per}^\top K_{per} Q_{per})_{jj}
        &= \frac{2-2\cos(h\alpha_j)}{h} \sum_{k=-(p-1)}^{p-1} \widetilde{\mathcal B}_{2p-1}(k) \cos (kh\alpha_j).
    \end{aligned}
    \end{equation}
    The desired representations of $\left( D_{reg}\right)_{jj}$ and $\left( \Lambda_{reg} \right)_{jj}$ are obtained by combining~\eqref{eq:Dreg2}, \eqref{eq:Lambda reg2}, \eqref{eq:lem:coefs:proof1}, \eqref{eq:lem:coefs:proof2}, and \eqref{eq:lem:coefs:proof3}. 
\end{proof}

\textbf{Acknowledgements.} The research of the second author was funded by the Austrian Science Fund (FWF): 10.55776/P33956. The first and the third author are members of the Gruppo Nazionale Calcolo Scientifico-Istituto Nazionale di Alta Matematica (GNCS-INDAM). Moreover, the third author acknowledges support from the MUR through the PRIN 2022 PNRR project P2022NC97R (NOTES).

\bibliographystyle{siamplain}

\end{document}